\documentclass[11pt]{article}
\usepackage{amsmath,amsthm,amsfonts,amssymb,enumerate}
\usepackage[colorlinks=true,citecolor=black,linkcolor=black,urlcolor=blue]{hyperref}
%\usepackage{algorithm}
%\usepackage{algorithmic}   
 %\floatname{algorithm}{Algorithm} 
 %\renewcommand{\algorithmicrequire}{\textbf{Input:}} 
 %\renewcommand{\algorithmicensure}{\textbf{Output:}} 
\usepackage[lmargin=30mm,rmargin=30mm,bmargin=30mm,tmargin=30mm]{geometry}

\theoremstyle{plain}
\newtheorem{theorem}{Theorem}
\newtheorem{lemma}[theorem]{Lemma}

\newtheorem{conjecture}[theorem]{Conjecture}

\newenvironment{definition}[1][Definition]{\begin{trivlist}\item[\hskip \labelsep {\bfseries #1}]}{\end{trivlist}}
\theoremstyle{remark}
\newtheorem{claim}{Claim}

\DeclareMathOperator{\tw}{tw}

\DeclareMathOperator{\Kneser}{Kneser}

\begin{document}

\title{\bf{Treewidth of the Kneser Graph and the Erd\H{o}s-Ko-Rado Theorem}}
\author{Daniel J. Harvey\footnote{Department of Mathematics and Statistics, The University of Melbourne, Melbourne, Australia  (\texttt{d.harvey@pgrad.unimelb.edu.au}). Supported by an Australian Postgraduate Award.} \,and David R. Wood\footnote{School of Mathematical Sciences, Monash University, Melbourne, Australia (\texttt{david.wood@monash.edu}). Supported by the Australian Research Council.}}
\maketitle

\begin{abstract}
Treewidth is an important and well-known graph parameter that measures the complexity of a graph. The \emph{Kneser graph} $\Kneser(n,k)$ is the graph with vertex set $\binom{[n]}{k}$, such that two vertices are adjacent if they are disjoint. We determine, for large values of $n$ with respect to $k$, the exact treewidth of the Kneser graph. In the process of doing so, we also prove a strengthening of the Erd\H{o}s-Ko-Rado Theorem (for large $n$ with respect to $k$) when a number of disjoint pairs of $k$-sets are allowed. %Might want to add a bit more here
\end{abstract}

\section{Introduction}
\label{section:intro} %and perhaps talk about Turan -- then make sep changes. This is our Monday goal.

A \emph{tree decomposition} of a graph $G$ is a pair $(T, (B_{x} \subset V(G) :x \in V(T)))$ where $T$ is a tree and $(B_{x} \subseteq V(G): x \in V(T))$ is a collection of sets, called \emph{bags}, indexed by the nodes of $T$. The following properties must also hold:
\begin{itemize}
\item for each $v \in V(G)$, the nodes of $T$ that index the bags containing $v$ induce a non-empty connected subtree of $T$,
\item for each $vw \in E(G)$, there exists some bag containing both $v$ and $w$.
\end{itemize}

The \emph{width} of a tree decomposition is the size of the largest bag, minus 1. The \emph{treewidth} of a graph $G$, denoted $\tw(G)$, is the minimum width of a tree decomposition of $G$.

Treewidth is an important concept in modern graph theory. Treewidth was initially defined by Halin \cite{Halin76} (with different nomenclature to the modern standard) and then later by Robertson and Seymour \cite{GMII}, who used it in their famous series of papers proving the Graph Minor Theorem \cite{GMall}. The treewidth of a graph essentially describes how ``tree-like" it is, where lower treewidth implies a more ``tree-like" structure. (A forest has treewidth at most 1, for example.) Treewidth is also of key interest in the field of algorithm design---for example, treewidth is a key parameter in fixed-parameter tractability \cite{Bodlaender-AC93}. \newline

Let $[n] = \{1, \dots, n\}$. For any set $S \subseteq [n]$, a subset of $S$ of size $k$ is called a \emph{$k$-set}, or occasionally a $k$-set in $S$. Let $\binom{S}{k}$ denote the set of all $k$-sets in $S$. We say two sets \emph{intersect} when they have non-empty intersection.

The \emph{Kneser graph} $\Kneser(n,k)$ is the graph with vertex set $\binom{[n]}{k}$, such that two vertices are adjacent if they are disjoint.

Kneser graphs were first investigated by Kneser \cite{kneser}. The chromatic number of $\Kneser(n,k)$ was shown to be $n-2k+2$ by Lov\'{a}sz \cite{lazlo}, as Kneser originally conjectured. This was an important proof due to the development of the topological methods involved. Many other proofs of this result have been found, for example consider \cite{Ziegler}, which gives a more combinatorial version. The Kneser graph is also of interest with regards to fractional chromatic number \cite{fgt}. The famous Erd\H{o}s-Ko-Rado Theorem \cite{EKR} has a well-known relationship to the Kneser graph, as does the generalisation to cross-intersecting families by Pyber \cite{Pyber}. We discuss these in more detail in Section~\ref{section:prelim}, and shall use both of these results to prove the following two theorems about the treewidth of the Kneser graph.

\begin{theorem}
\label{theorem:main}
Let $G$ be a Kneser graph with $n \geq 4k^2 - 4k + 3$ and $k \geq 3$. Then $$\tw(G) = \binom{n-1}{k}-1.$$
\end{theorem}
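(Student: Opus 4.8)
The plan is to establish matching upper and lower bounds on $\tw(G)$. The upper bound is an explicit tree decomposition and uses no Erd\H{o}s--Ko--Rado input; the lower bound carries the weight of the argument, and is where the Erd\H{o}s--Ko--Rado Theorem, Pyber's cross-intersecting generalisation, and the new strengthening all enter.

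\emph{Upper bound.} Fix $1\in[n]$, let $\mathcal{S}=\{A\in\binom{[n]}{k}:1\in A\}$ and $\overline{\mathcal{S}}=\binom{[n]\setminus\{1\}}{k}$. Take a tree that is a star with centre $c$ and a leaf $x_A$ for each $A\in\mathcal{S}$, and set $B_c=\overline{\mathcal{S}}$ and $B_{x_A}=\{A\}\cup\{B:A\cap B=\emptyset\}$ (the closed neighbourhood of $A$ in $G$, which lies in $\{A\}\cup\overline{\mathcal{S}}$ because $1\in A$). The tree-decomposition axioms are immediate: $\mathcal{S}$ is independent, so every edge of $G$ lies inside $\overline{\mathcal{S}}$ (covered by $B_c$) or joins some $A\in\mathcal{S}$ to $\overline{\mathcal{S}}$ (covered by $B_{x_A}$), and for each $B\in\overline{\mathcal{S}}$ the bags containing $B$ are $B_c$ together with the $B_{x_A}$ with $A\cap B=\emptyset$, which index a subtree. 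The width is $\max\{|\overline{\mathcal{S}}|,\max_A|B_{x_A}|\}-1=\max\{\binom{n-1}{k},\binom{n-k}{k}+1\}-1=\binom{n-1}{k}-1$, using only the elementary inequality $\binom{n-1}{k}\ge\binom{n-k}{k}+1$, valid for $n\ge 2k$.

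\emph{Lower bound, structure.} Suppose $G$ had a tree decomposition $(T,(B_x))$ with every bag of size at most $\binom{n-1}{k}-1$; we may assume $|V(T)|\ge2$, since otherwise the one bag is all of $V(G)$. Then $|V(G)\setminus B_x|\ge\binom{n}{k}-\binom{n-1}{k}+1=\binom{n-1}{k-1}+1$ for every $x$, so by Erd\H{o}s--Ko--Rado $V(G)\setminus B_x$ is not independent and $G-B_x$ has a component with at least two vertices. The key structural claim is that it has exactly one: if $C$ is one, it contains a disjoint pair $A,A'$, so every vertex of any other component is cross-intersecting with $C$, hence meets the $2k$-set $A\cup A'$ in at least two points; only $\binom{n}{k}-2\binom{n-k}{k}+\binom{n-2k}{k}$ $k$-sets do so, and a second component of size $\ge2$ would impose the same bound on $C$, giving $|V(G)\setminus B_x|\le2\bigl(\binom{n}{k}-2\binom{n-k}{k}+\binom{n-2k}{k}\bigr)<\binom{n-1}{k-1}+1$ once $n\ge4k^2-4k+3$, a contradiction. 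So $V(G)\setminus B_x=C_{B_x}\sqcup Z_{B_x}$, with $C_{B_x}$ the unique big component (connected, disjoint from $B_x$) and $Z_{B_x}$ a set of isolated vertices with $|Z_{B_x}|\le\binom{n}{k}-2\binom{n-k}{k}+\binom{n-2k}{k}$; hence $|C_{B_x}|\ge\binom{n-1}{k-1}+1-|Z_{B_x}|$.

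\emph{Lower bound, conclusion.} Each $C_{B_x}$ is connected and misses $B_x$, so it lies beyond exactly one neighbour $s(x)$ of $x$ in $T$. The map $x\mapsto\{x,s(x)\}$ sends $V(T)$ into the smaller set $E(T)$, so it is not injective, and this forces an edge $xy$ of $T$ with $s(x)=y$ and $s(y)=x$. Then $C_{B_x}$ lies strictly on $y$'s side and $C_{B_y}$ strictly on $x$'s side of $xy$; these two parts of $T$ are node-disjoint, so $C_{B_x}$ and $C_{B_y}$ are disjoint and no edge of $G$ joins them, that is, they are cross-intersecting. Pyber's theorem then gives $|C_{B_x}|\cdot|C_{B_y}|\le\binom{n-1}{k-1}^2$; if $Z_{B_x}$ and $Z_{B_y}$ were empty this would already contradict $|C_{B_x}|,|C_{B_y}|\ge\binom{n-1}{k-1}+1$. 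The remaining task --- removing the small error terms $|Z_{B_x}|,|Z_{B_y}|$ --- is precisely where the strengthened Erd\H{o}s--Ko--Rado Theorem is needed: $V(G)\setminus B_x$ is a family of more than $\binom{n-1}{k-1}$ sets all of whose disjoint pairs lie inside $C_{B_x}$, and a bound on the size of a family (or of a pair of cross-intersecting families) admitting only a controlled number of disjoint pairs sharpens the estimate above into the contradiction $|B_x|\ge\binom{n-1}{k}$. I expect this to be the main obstacle: proving that quantitative strengthening and verifying that the hypotheses $n\ge4k^2-4k+3$ and $k\ge3$ suffice for it; the construction, the cross-intersecting structural claim, and the pigeonhole step are comparatively routine.
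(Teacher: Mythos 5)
Your upper bound is exactly the paper's: a star-shaped tree decomposition whose centre is the complement of an Erd\H{o}s--Ko--Rado star and whose leaves are closed neighbourhoods, giving width $\max\{\binom{n-1}{k},\binom{n-k}{k}+1\}-1$. Your lower-bound architecture is also close in spirit to the paper's, which invokes the Robertson--Seymour separator theorem to get a set $X$ with $|X|\le\tw(G)+1$ and a split of $G-X$ into two large cross-intersecting parts, rather than extracting the two parts from two adjacent bags as you do. But there are two genuine gaps. The first is quantitative: your ``exactly one component of size $\ge 2$'' claim rests on the inequality $2\bigl(\binom{n}{k}-2\binom{n-k}{k}+\binom{n-2k}{k}\bigr)<\binom{n-1}{k-1}+1$, and the left side is a second difference of order $\frac{2k^3(k-1)}{k!}n^{k-2}$, so this needs $n\gtrsim 2k^2(k-1)$ --- a cubic threshold in $k$, not the theorem's quadratic one. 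Concretely, for $k=3$ and $n=4k^2-4k+3=27$ the left side equals $2\cdot 207=414$ while the right side is $\binom{26}{2}+1=326$, so the step fails exactly in the regime the theorem covers.

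The second gap is the decisive one: the step you defer as ``the main obstacle'' is the entire content of the lower bound, and nothing in your proposal supplies it. Pyber's theorem gives no contradiction once the families have size only $\binom{n-1}{k-1}+1-|Z|$, and its extremal case ($\mathcal{A}=\mathcal{B}=$ a star) shows that the needed strengthening must exploit that your two families are \emph{disjoint}; this is precisely Lemma~\ref{lemma:keylemma} of the paper. Its proof is not routine: one first finds an element $n$ lying in at least a $\frac1k$-fraction of $B$, then applies the Kruskal--Katona shadow theorem (Lemma~\ref{lemma:minshadow}) to the complements of the sets of $A$ avoiding $n$ to force $|A_{-n}|\le\binom{n-3}{k-2}$, and finally runs counting arguments showing every vertex of $A\cup B$ contains $n$, whence $|G-X|\le\binom{n-1}{k-1}$. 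Until you prove a statement of that strength (and with constants compatible with $n\ge 4k^2-4k+3$), the proof is incomplete at its central point.
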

This theorem is our main result, giving an exact answer for the treewidth of the Kneser graph when $n$ is sufficiently large. In order to prove this, we show that $\binom{n-1}{k}-1$ is both an upper bound and lower bound on the treewidth. We construct a tree decomposition directly in Section~\ref{section:upper} to prove an upper bound. In Section~\ref{section:lbone} we prove the lower bound by using the relationship between treewidth and separators.
%After deleting any separator $X$ from $V(G)$, the remaining vertices form a pair of \emph{cross-intersecting families}. We show there is an upper bound on the order of these sets, and so a lower bound on $|X|$.

%\begin{theorem}
%\label{theorem:secondary}
%Let $G$ be a Kneser graph with $n \geq \frac{1}{2}(\sqrt{5k^2 - 12k + 8} + 3k + 2)$ and $k \geq 3$. Then $\binom{n-1}{k} - \binom{n-1}{k-1} - 1 \leq \tw(G) \leq \binom{n-1}{k} - 1$.
%\end{theorem}
%When we only have a weaker lower bound, we can prove a much weaker result on the treewidth of the Kneser graph, as stated above. We believe that the lower bound on the treewidth should be able to be improved here, giving a result similar to Theorem~\ref{theorem:main}.

We also prove the following more precise result when $k=2$.

\begin{theorem}
\label{theorem:ktwo}
Let $G$ be a Kneser graph with $k=2$. Then \newline
$$\tw(G) =
\begin{cases}
0 &\text{ if } n \leq 3 \\
1 &\text{ if } n = 4 \\
4 &\text{ if } n = 5 \\
\binom{n-1}{2} - 1 &\text{ if } n \geq 6.
\end{cases}$$
\end{theorem}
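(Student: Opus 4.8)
The plan is to settle $n\le 5$ directly and then prove the two matching bounds for $n\ge 6$. For $n\le 3$ no two $2$-subsets of $[n]$ are disjoint, so $G$ is edgeless and $\tw(G)=0$. For $n=4$ one checks $G\cong 3K_2$, so $\tw(G)=1$. For $n=5$, $G$ is the Petersen graph, whose treewidth is $4$: the lower bound follows from the $K_5$ minor obtained by contracting the five spokes, and the upper bound from an explicit width-$4$ tree decomposition. Hence the substance of the theorem is the case $n\ge 6$, where the claim is $\tw(G)=\binom{n-1}{2}-1$.

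For the upper bound I would use the tree decomposition of Section~\ref{section:upper}, which here specialises as follows. Let $I=\{e\in\binom{[n]}{2}:1\in e\}$, a maximum independent set of $G$, so that $V(G)\setminus I=\binom{[n]\setminus\{1\}}{2}$ has size $\binom{n-1}{2}$. Take $T$ to be the star with centre $c$ and one leaf $\ell_v$ for each $v\in I$, set $\beta(c)=V(G)\setminus I$, and set $\beta(\ell_v)=\{v\}\cup N_G(v)$. This is a tree decomposition: edges inside $V(G)\setminus I$ lie in $\beta(c)$; every edge at a vertex $v\in I$ runs from $v$ into $V(G)\setminus I$ (as $I$ is independent) and so lies in $\beta(\ell_v)$; and the bags containing any fixed vertex induce a subtree of the star. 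Since $|N_G(v)|=\binom{n-2}{2}$ and $\binom{n-2}{2}+1\le\binom{n-1}{2}$ for all $n\ge 3$, every bag has size at most $\binom{n-1}{2}$, so $\tw(G)\le\binom{n-1}{2}-1$.

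The real work is the lower bound $\tw(G)\ge\binom{n-1}{2}-1$, which I would obtain through the relationship between treewidth and separators exploited in Section~\ref{section:lbone}, fed with Pyber's theorem and the $k=2$ case of the Erd\H{o}s--Ko--Rado strengthening. The structural point is that any separation $(A,S,B)$ of $G$ with $A,B\neq\emptyset$ forces $A$ and $B$ to be cross-intersecting families of $2$-sets: Pyber's theorem then gives $|A|\,|B|\le(n-1)^2$, and the strengthening adds that, unless $A\cup B$ lies in a common star (which makes $|A|+|B|\le n-1$, tiny), one of $A,B$ has only boundedly many members. Thus every separator of $G$ of size less than $\binom{n-1}{2}$ is extremely unbalanced, and the treewidth--separator correspondence (equivalently, exhibiting a haven of order $\binom{n-1}{2}$, or a recursive decomposition by small separators) can then be used to rule out any tree decomposition all of whose bags have size at most $\binom{n-1}{2}-1$. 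For $k=2$ this bookkeeping can be carried out for every $n\ge 6$; the general argument of Section~\ref{section:lbone} needs $n\ge 4k^2-4k+3$, which is $n\ge 11$ here, so the values $6\le n\le 10$ require a separate, more hands-on verification. Together with the upper bound this yields $\tw(G)=\binom{n-1}{2}-1$ for $n\ge 6$.

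The main obstacle is precisely making this lower bound \emph{exact}. Coarse separator arguments are too weak: balanced separators with uniform vertex weights yield only $\tw(G)\gtrsim\tfrac12|V(G)|$, and connectivity-type bounds yield only about $\binom{n-2}{2}$ (each adhesion set of a reduced tree decomposition is a vertex cut, and $G$ can be disconnected by isolating a vertex); each of these falls short of $\binom{n-1}{2}-1$ by order $n$, and only the fine structure of cross-intersecting $2$-set families closes that gap. That this structure is genuinely needed is visible in the statement itself: at $n=5$ the complement of a maximum star induces the ``balanced'' graph $3K_2$ (and at $n=4$ an edgeless graph), so that the centre bag of the construction above can be split apart and the formula $\binom{n-1}{2}-1$ fails there; the argument must therefore be tight at the threshold $n=6$.
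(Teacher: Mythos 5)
Your handling of $n\le 5$ and of the upper bound for $n\ge 6$ is correct and matches the paper: the edgeless/$3K_2$/Petersen cases are as you say, and the star-shaped decomposition centred on the complement of a maximum star, via Lemma~\ref{lemma:gub} and Theorem~\ref{theorem:ekr}, gives $\tw(G)\le\binom{n-1}{2}-1$.

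The lower bound for $n\ge 6$, however, is not actually proved in your write-up, and the route you gesture at does not close on its own. Pyber's theorem with the balance condition $\tfrac13|G-X|\le|A|,|B|$ yields only $|G-X|\le\tfrac{3}{\sqrt 2}(n-1)\approx 2.12(n-1)$, far from the needed $|G-X|\le n-1$; the shadow argument of Lemma~\ref{lemma:keylemma} improves this but, as you correctly observe, only for $n\ge 11$ when $k=2$. You then defer $6\le n\le 10$ to ``a separate, more hands-on verification'' that ``can be carried out'', but that verification is precisely the content of Theorem~\ref{theorem:ktwo} and you have not carried it out. The paper's Section~\ref{section:ktwolow} argues differently and elementarily: since $|G-X|>n-1=\alpha(G)$, one part contains an edge, say $\{1,2\}\{3,4\}$, which traps the other part inside $\{\{1,3\},\{1,4\},\{2,3\},\{2,4\}\}$, and a short case split (does the second part also contain an edge?) finishes every $n\ge 7$ with no appeal to Pyber at all. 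More importantly, for $n=6$ the separator bound is tight ($|X|=9=\binom{5}{2}-1$) and balance alone is not enough; the paper invokes the sharper form of Theorem~\ref{theorem:sep} from \cite{GMII} guaranteeing that the $\tfrac23$-separator may be taken to be a subset of a single bag of a normalised tree decomposition, and then derives a contradiction with the normalisation (a neighbouring bag must contain all of $X$). Your proposal never mentions this bag-level refinement, and without it the $n=6$ case does not follow from balanced-separator considerations alone. This is a genuine gap.
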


The upper bounds for Theorem~\ref{theorem:ktwo} are proved in Section~\ref{section:upper}, and the lower bounds in Section~\ref{section:ktwolow}.

%Finally, we prove the following related Tur\'{a}n style result about intersection graphs of $\binom{[n]}{k}$. 
%Recall that the \emph{intersection graph} of a collection of sets $S$ is the graph with vertex set $S$ and an edge between two vertices iff their corresponding sets in $S$ intersect. Note the obvious relationship to the Kneser graph---the Kneser graph is the complement to an intersection graph with $S=\binom{[n]}{k}$. Also recall the famous result of Tur\'{a}n \cite{turan}, which states that the edge-maximal graph with $n$ vertices and no $(r+1)$-clique is constructed by partitioning the vertex set into $r$ (almost) equal sized colour classes and taking every possible polychromatic edge. Such a graph is known as a \emph{Tur\'{a}n graph}. We provide a result on the maximum order of a Tur\'{a}n graph that is a subgraph of a complement of the Kneser graph.

Finally, in the process of proving Theorem~\ref{theorem:main}, we prove the following generalisation of the Erd\H{o}s-Ko-Rado Theorem (Theorem~\ref{theorem:ekr} in Section~\ref{section:prelim}), which says that if $n \geq 2k$ and $H$ is a complete subgraph in the complement of $\Kneser(n,k)$ then $|H| \leq \binom{n-1}{k-1}.$ We prove the same bound for balanced complete multipartite graphs. %rather than requiring all $k$-sets of a family $\mathcal{A}$ to intersect, instead we allow a certain number of $k$-sets to be disjoint. 
%This idea has been explored recently by Gerbner et al.\cite{almostisect}; there each $k$-set in a family $\mathcal{A}$ must be disjoint from all but at most $b$ $k$-sets in $\mathcal{A}$, for some fixed parameter $b$. Here, instead we partition a family of $k$-sets into colour classes and allow any two $k$-sets inside a colour class to be disjoint.

\begin{theorem}
\label{theorem:turan}
Say $p \in [\frac{2}{3},1)$ and $n \geq \max(4k^2 - 4k +3, \frac{1}{1-p}(k^2 -1)+2)$. If $H$ is a complete multipartite subgraph of the complement of $\Kneser(n,k)$ such that no colour class contains more than $p|H|$ vertices, then $|H| \leq \binom{n-1}{k-1}$.
 
%Let $H$ be the intersection graph with $V(H) = \binom{[n]}{k}$, and say $n \geq 4k^2 - 4k + 3$. If $H'$ is a complete multipartite subgraph of $H$ with equal sized colour classes, then $|V(H')| \leq \binom{n-1}{k-1}$.
\end{theorem}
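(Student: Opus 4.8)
The plan is to argue by contradiction and show that if $|H| > \binom{n-1}{k-1}$ then the vertex set of $H$ is forced into a single ``star'' $\{S \in \binom{[n]}{k} : x \in S\}$, which has only $\binom{n-1}{k-1}$ members. Throughout write $N := \binom{n-1}{k-1}$ and $M := \binom{n}{k} - 2\binom{n-k}{k} + \binom{n-2k}{k}$, the number of $k$-sets meeting both of two fixed disjoint $k$-sets; note $M = \Theta(n^{k-2})$ while $N = \Theta(n^{k-1})$, so $M$ is negligible against $N$ for $n$ large.

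First, the colour-class bound forces the number of classes $m$ to be at least $2$ (a single class would have size $|H| \le p|H| < |H|$). If every colour class is an intersecting family, then any two vertices of $H$ meet --- same-class pairs because their class is intersecting, cross-class pairs because $H$ is complete multipartite in the complement of the Kneser graph --- so $V(H)$ is an intersecting family and Theorem~\ref{theorem:ekr} gives $|H| \le N$, a contradiction. Hence some colour class $V$ contains disjoint $k$-sets $A$ and $B$. The key localisation: every vertex of $H\setminus V$ lies in a class other than $V$, hence is adjacent in $H$ to both $A$ and $B$, hence meets both; therefore $H\setminus V$ is contained in the family of $k$-sets meeting both $A$ and $B$, so $|V(H)\setminus V| \le M$, and in particular $(1-p)|H| \le M$.

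Next I would show that $\mathcal{Y} := V(H)\setminus V$ is itself an intersecting family, and then pin it down. Once $\mathcal Y$ is intersecting it is large: $|\mathcal{Y}| \ge (1-p)|H| > (1-p)N$, and the hypothesis $n \ge \tfrac{1}{1-p}(k^2-1)+2$ is exactly (up to slack) what is needed to make $(1-p)N > N - \binom{n-k-1}{k-1}+1$, so by the Hilton--Milner theorem $\mathcal{Y}$ is contained in a star $\{S : x \in S\}$ and omits fewer than $\binom{n-k-1}{k-1}$ of its $N$ members. Now $V$ is pulled into the same star: if some $D \in V$ had $x \notin D$, then for each of the $\binom{n-k-1}{k-1}$ choices of a $(k-1)$-subset $T \subseteq [n]\setminus(D\cup\{x\})$, the $k$-set $\{x\}\cup T$ lies in the star and is disjoint from $D$, hence --- as $D$ is adjacent in $H$ to all of $\mathcal{Y}$ --- lies outside $\mathcal{Y}$; this would make $\mathcal{Y}$ omit at least $\binom{n-k-1}{k-1}$ sets of the star, a contradiction. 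Hence $V \subseteq \{S : x\in S\}$ too, so $V(H) = V\cup\mathcal{Y} \subseteq \{S : x\in S\}$ and $|H| \le N$ --- the final contradiction.

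The step I expect to be the real obstacle is justifying that $\mathcal{Y}$ is intersecting, equivalently handling the case that two distinct colour classes are non-intersecting. If a class $V' \ne V$ also contains a disjoint pair, then applying the localisation to $V'$ bounds \emph{every} colour class (including $V$) by $M$, so $|H| \le 2M$; this suffices only when $2M \le N$, which need not hold near the lower end of the admissible range of $n$. Closing this residual case seems to require combining the localisations of both non-intersecting classes --- each is then confined to a ``meet-both'' family --- with Pyber's cross-intersecting theorem from Section~\ref{section:prelim} and the precise value of $n$, in particular the growth of the term $\tfrac{1}{1-p}(k^2-1)+2$ as $p\to 1$. (It is also convenient that $p \ge \tfrac{2}{3}$: since every class has size at most $p|H|$, a short subset-sum argument merges the classes into two groups of total size in $[(1-p)|H|,\,p|H|]$, reducing matters to a single cross-intersecting pair of families.)
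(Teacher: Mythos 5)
Your approach is genuinely different from the paper's, but it has a real gap that you yourself flag, and the closing move you gesture at is not strong enough.

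The paper proves Theorem~\ref{theorem:turan} as a one-paragraph corollary of Lemma~\ref{lemma:keylemma}: set $X := V(\overline{G}) \setminus V(H)$, observe that the colour classes of $H$ are the components (or unions of components) of $G - X$ with the prescribed balance, so $X$ is a $p$-separator, and read off $|X| \ge \binom{n-1}{k}$. All the combinatorial work is in Lemma~\ref{lemma:keylemma}, which uses the Kruskal--Katona shadow bound (Lemma~\ref{lemma:minshadow}) applied to complements of $A_{-n}$ to force every vertex of $G - X$ to contain a common element. Your plan instead attacks $H$ directly via Hilton--Milner (which the paper never invokes), localising $V(H)\setminus V$ into a star. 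If it worked, it would be an attractive alternative because it replaces shadows with the more familiar stability form of EKR. But the step you identify as "the real obstacle" is in fact fatal as written. Nothing forces $\mathcal{Y} = V(H)\setminus V$ to be intersecting: it is a union of colour classes, and each such class could itself contain a disjoint pair. Your fallback of applying the localisation twice only yields $|H| \le 2M$ with $M = \Theta(n^{k-2})$, and, as you note, $2M \le N$ is not implied by $n \ge 4k^2-4k+3$: for instance with $k=3$, $p=2/3$, $n=27$ (the smallest admissible value) one gets $M=207$ and $2M = 414 > 325 = N$, so no contradiction.

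Your proposed repair — merge the classes into two balanced groups and apply Pyber's cross-intersecting bound — is precisely the "na\"{\i}ve" bound that the paper writes out explicitly just before Lemma~\ref{lemma:keylemma}, and it only gives $|H| \le \sqrt{2/(1-p)}\,\binom{n-1}{k-1}$, not $|H| \le \binom{n-1}{k-1}$. The paper's whole point is that this na\"{\i}ve Pyber bound is too weak and must be sharpened by exploiting that the two groups are \emph{disjoint}, which is what the shadow argument in Lemma~\ref{lemma:keylemma} accomplishes. So the residual case in your sketch is not a technicality: it is exactly where the content of the theorem lives, and the tools you have invoked (Hilton--Milner plus Pyber) do not close it. To finish along your lines you would need a genuinely new idea for the two-non-intersecting-classes case, or else you would end up re-deriving something equivalent to the shadow analysis of Lemma~\ref{lemma:keylemma}.

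As a smaller point, your Hilton--Milner threshold claim that $n \ge \frac{1}{1-p}(k^2-1)+2$ guarantees $(1-p)\binom{n-1}{k-1} > \binom{n-1}{k-1} - \binom{n-k-1}{k-1} + 1$ should be verified rather than asserted "up to slack"; the estimate $\binom{n-k-1}{k-1}/\binom{n-1}{k-1} \ge 1 - k(k-1)/(n-k+1)$ does appear to make it go through for $k\ge 2$ in the admissible range, but it is not automatic for $p$ near $1$ and deserves a line of computation.
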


Note that similar, but incomparable, generalisations of the Erd\H{o}s-Ko-Rado Theorem have recently been explored in \cite{almostisect,sperner,scott}.
Theorem~\ref{theorem:turan} is proven in Section~\ref{section:lbone}, since it follows almost directly from our proof of the lower bound on the treewidth of a Kneser graph.

\section{Basic Definitions and Preliminaries}
\label{section:prelim}

From now on, we refer to the graph $\Kneser(n,k)$ as $G$, with $n$ and $k$ implicit. 

%The set of elements $\{1, \dots, b\}$ shall be denoted by $[b]$. We refer to a set of size $a$ that contains elements from some $[b]$ as an \emph{$a$-set}, or sometimes, as an $a$-set on $[b]$. That is, the vertices of $G$ are $k$-sets on $[n]$.  For any set $S$, we let $\binom{S}{a}$ denote the set of all $a$-sets with elements chosen from $S$. Therefore, $V(G) = \binom{[n]}{k}$. We shall say two $a$-sets \emph{intersect} if they have a non-empty intersection.

Let $\Delta(H)$ be the maximum degree of a graph $H$ and $\delta(H)$ be the minimum degree of a graph $H$. Also let $\alpha(H)$ be the size of the largest independent set of $H$, where an \emph{independent set} is a set of pairwise non-adjacent vertices. If $k=1$, then $G$ is the complete graph. If $n < 2k$ then $G$ has no edges. If $n=2k$ then $G$ is an induced matching. From now on, we shall assume that $n \geq 2k+1$ and $k \geq 2$, since the treewidth is trivial in the other cases.

In order to prove a lower bound on the treewidth of the Kneser graph, we use a known result about the relationship between treewidth and separators.

\begin{definition}%(Originally defined in \cite{GMII}, this definition from \cite{Reed97}.)
Given a constant $p \in [\frac{2}{3},1)$, a \emph{$p$-separator} (of order $k$) is a set $X \subset V(G)$ such that $|X| \leq k$ and no component of $G-X$ contains more than $p|G-X|$ vertices. %The \emph{separation number} $\sep(G)$ is the smallest integer $k$ such that for every $S \subset V(G)$ there is a separator $X$ with respect to $S$ and $|X| \leq k$.
\end{definition}

\begin{theorem} \cite{GMII}
\label{theorem:sep}
%$\sep(G) \leq \tw(G)+1$
For each $p \in [\frac{2}{3},1)$, every graph $G$ has a $p$-separator of order $\tw(G)+1$.
\end{theorem}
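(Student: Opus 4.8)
The plan is to derive Theorem~\ref{theorem:sep} from an optimal tree decomposition of $G$, via the classical link between tree decompositions and balanced separators. Fix a tree decomposition $(T,(B_x : x\in V(T)))$ of $G$ of width $w:=\tw(G)$. For an edge $xy$ of $T$, let $R_{x,y}$ denote the set of vertices $v$ of $G$ such that every bag containing $v$ lies in the component of $T-xy$ containing $y$. Using the connectedness of the subtrees $\{x : v\in B_x\}$, one checks that for each node $x$: (i) the sets $R_{x,y}$, over the neighbours $y$ of $x$, partition $V(G)\setminus B_x$; (ii) $G$ has no edge between $R_{x,y}$ and $R_{x,y'}$ for distinct neighbours $y\neq y'$, so each component of $G-B_x$ lies inside a single $R_{x,y}$; and (iii) $R_{x,y}$ and $R_{y,x}$ are disjoint, so $|R_{x,y}|+|R_{y,x}|\le|V(G)|$.

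First I would orient each edge $xy$ of $T$ towards $y$ whenever $|R_{x,y}|>\tfrac12|V(G)|$. By (iii) no edge is oriented both ways, so $T$ has a node $x_0$ with no outgoing edge; by (i) and (ii), every component of $G-B_{x_0}$ then has at most $\tfrac12|V(G)|$ vertices. If $|B_{x_0}|\le\bigl(1-\tfrac1{2p}\bigr)|V(G)|$ we are done, since then $\tfrac12|V(G)|\le p\bigl(|V(G)|-|B_{x_0}|\bigr)$, so $X:=B_{x_0}$ is a $p$-separator of order at most $w+1$. As $p\ge\tfrac23$ forces $1-\tfrac1{2p}\ge\tfrac14$, this already covers every instance in which the sink bag has at most a quarter of the vertices.

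The main obstacle is the remaining case, where the sink bag is a large constant fraction of $V(G)$: then $|B_{x_0}|>\bigl(1-\tfrac1{2p}\bigr)|V(G)|$ forces $|V(G)|<\tfrac{1}{1-1/(2p)}(w+1)\le 4(w+1)$, so $G$ is small relative to $w$ and the crude bound $\tfrac12|V(G)|$ need not lie below $p(|V(G)|-|B_{x_0}|)$; a separate, more delicate argument is required. One natural route is induction on $|V(G)|$: if $G-B_{x_0}$ is disconnected, let $C$ be the unique component with $|V(C)|>p(|V(G)|-|B_{x_0}|)$, apply the inductive hypothesis to the proper induced subgraph $G[B_{x_0}\cup V(C)]$ (whose treewidth is at most $w$, and in which $B_{x_0}$ still contains the neighbourhood of $V(C)$), and verify that the separator it returns still works in $G$ once the small remaining components of $G-B_{x_0}$ are reattached through $B_{x_0}$ — which is exactly where the estimate must be done carefully, possibly by demanding a slightly stronger balance in the recursive call. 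If instead $G-B_{x_0}$ is connected, then $|V(C)|=|V(G)|-|B_{x_0}|\le\tfrac12|V(G)|$ gives $|V(G)|\le 2(w+1)$, and one can start from a minimal separator of $G$ (of order at most $w+1$, leaving at least two components, and taking $X=V(G)$ if $G$ is complete) and again repair the balance by a short recursion. Apart from this low-dimensional bookkeeping, the proof is just the orientation argument above.
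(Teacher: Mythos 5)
The paper does not prove this statement itself: Theorem~\ref{theorem:sep} is a black-box citation to Robertson and Seymour~\cite{GMII}, so there is no in-paper argument to compare against and your attempt is best read as a reproof of a known result.

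Your first half is correct and standard. Defining $R_{x,y}$ via connectedness of the subtrees, orienting $xy\to y$ when $|R_{x,y}|>\tfrac12|V(G)|$, noting $|R_{x,y}|+|R_{y,x}|\le|V(G)|$ so no edge is oriented both ways, and extracting a sink $x_0$ gives a bag $B_{x_0}$ of size at most $w+1$ with every component of $G-B_{x_0}$ of size at most $\tfrac12|V(G)|$; your arithmetic correctly shows this is a $p$-separator whenever $|B_{x_0}|\le\bigl(1-\tfrac1{2p}\bigr)|V(G)|$, and in particular whenever $|V(G)|\ge 4(w+1)$.

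The second half is a genuine gap, which you yourself flag. The proposed recursion on $G'=G[B_{x_0}\cup V(C)]$ is not obviously sound: the returned separator $X'$ is a $p$-separator \emph{of $G'$}, but once the small components of $G-B_{x_0}$ are reattached through $B_{x_0}\setminus X'$, the components of $G-X'$ can grow and the denominator changes from $|G'-X'|$ to $|G-X'|$; whether the inequality survives depends on a quantitative interaction you have not pinned down, and a plain induction hypothesis stated as ``is a $p$-separator'' does not obviously close. The ``connected'' subcase sketch ($|V(G)|\le 2(w+1)$, start from a minimal separator, ``repair the balance'') is likewise not an argument. Note also that this is not a harmless corner case for this paper: in the intended application $|V(G)|=\binom{n}{k}$ while the treewidth turns out to be $\binom{n-1}{k}-1$, whose ratio tends to $1$ as $n\to\infty$, so the relevant regime is precisely $|V(G)|<4(\tw(G)+1)$ --- the case your argument leaves open. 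As written, the proposal does not establish Theorem~\ref{theorem:sep}.
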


It can easily be shown that we can partition the components of $G-X$ into two parts, such that the components in a part contain, in total, at most $p|G-X|$ vertices. %We abuse notation and treat this partition of the components as a partition of the vertex set. Now every vertex of $S-X$ is in exactly one part of the partition, so we can say one part contains at least half of the vertices of $S-X$, and the other part contains at most half. 
This gives the following lemma.

%From this point on, we let $X$ be the smallest separator with respect to $V(G)$. Thus $\sep(G) \geq |X|$. Label the two parts of $G-X$ as $A$ and $B$. Now every vertex of $G-X$ is in either $A$ or $B$, so one must contain at least half of the total, and the other contains at most half of the total. Thus, without loss of generality, we can say the following:
%\begin{itemize}
%\item $\frac{1}{3}|G-X| \leq |A| \leq \frac{1}{2}|G-X|$,
%\item $\frac{1}{2}|G-X| \leq |B| \leq \frac{2}{3}|G-X|$.
%\end{itemize}

%In order to show that $X$ is forced to be large, we shall show that $G-X$ is forced to be small. 

\begin{lemma}
\label{lemma:part}
Let $X$ be a $p$-separator. Then $V(G-X)$ can be partitioned into two parts $A$ and $B$, with no edge between $A$ and $B$, such that
\begin{itemize}
\item $(1-p)|G-X| \leq |A| \leq \frac{1}{2}|G-X|$,
\item $\frac{1}{2}|G-X| \leq |B| \leq p|G-X|$.
\end{itemize}
\end{lemma}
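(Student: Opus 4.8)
The plan is to reduce the statement to the following combinatorial claim: among the connected components of $G-X$, there is a sub-collection whose total number of vertices lies in the interval $[(1-p)m,\,pm]$, where $m := |G-X|$. Granting this, I would take $A$ to be the union of the components in that sub-collection or the union of the remaining components, whichever has at most $\tfrac12 m$ vertices, and $B$ to be the union of the other components. Since $A$ and $B$ are unions of distinct components of $G-X$, there is no edge between them, and $A\cup B=V(G-X)$. If the sub-collection has total size $s\in[(1-p)m,\,pm]$, then its complement has size $m-s\in[(1-p)m,\,pm]$ as well, so both parts lie in $[(1-p)m,\,pm]$; the one designated $A$ has size at most $\tfrac12 m$, hence $(1-p)m\le|A|\le\tfrac12 m\le|B|\le pm$, which is exactly what is claimed.

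To prove the combinatorial claim, I would list the components as $C_1,\dots,C_r$ with $|C_1|\ge|C_2|\ge\dots\ge|C_r|$ and put $s_i=|C_1|+\dots+|C_i|$ (so $s_0=0$, $s_r=m$). Because $X$ is a $p$-separator, $|C_i|\le pm$ for every $i$. Since $(1-p)m\le m=s_r$, let $j$ be the least index with $s_j\ge(1-p)m$; I claim $\{C_1,\dots,C_j\}$ works, i.e.\ that $s_j\le pm$. This is immediate if $j=1$, since then $s_1=|C_1|\le pm$. If $j\ge2$, minimality of $j$ gives $s_{j-1}<(1-p)m$, and since $s_{j-1}\ge|C_1|\ge|C_j|$ we get $|C_j|<(1-p)m$, so
$$s_j=s_{j-1}+|C_j|<2(1-p)m\le pm,$$
where the last step is precisely the hypothesis $p\ge\tfrac23$ (equivalently $2(1-p)\le p$).

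The whole argument is short, and the only genuine content is that final inequality $2(1-p)\le p$: sorting the components by decreasing size ensures that the component $C_j$ which first pushes the running total past $(1-p)m$ is no larger than that running total already was, so the overshoot is bounded by another $(1-p)m$, and $p\ge\tfrac23$ is exactly the condition that keeps $2(1-p)m$ below $pm$. I do not expect any real obstacle here beyond getting this bookkeeping right; the degenerate case $m=0$ is handled trivially by $A=B=\emptyset$.
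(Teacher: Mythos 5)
Your proof is correct. The paper does not actually supply a proof of this lemma; it merely remarks before the statement that ``it can easily be shown that we can partition the components of $G-X$ into two parts, such that the components in a part contain, in total, at most $p|G-X|$ vertices,'' which is exactly the combinatorial claim you prove by sorting the components by decreasing size and taking the shortest prefix whose running total reaches $(1-p)|G-X|$. Your argument is the natural way to fill in that omitted detail, and the reduction (both parts then lie in $[(1-p)m,pm]$, so labelling the smaller one $A$ gives all four inequalities) is handled correctly, including the hypothesis $p\ge\tfrac23$ appearing precisely as $2(1-p)\le p$ and the degenerate case $m=0$.
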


%\section{Combinatorial Preliminaries}
%\label{section:comb}

We use a few important well known combinatorial results.

%\begin{theorem}[Erd\H{o}s-Ko-Rado \cite{EKR,EKRSimple}]
%\label{theorem:ekr}
%If $n \geq 2k$ and $\mathcal{A}$ is a family of distinct subsets of $[n]$, such that each is of size at most $k$, no subset is contained within another, and each pair of subsets intersects, then the maximum number of sets in $\mathcal{A}$ can be given by the binomial coefficient $\binom{n-1}{k-1}$.
%\end{theorem}

%Also, the following related result holds.
%\begin{lemma}
%When $n > 2k$, then the unique maximum families are the families in which all subsets in $\mathcal{A}$ have size $k$, and some element $i$ is in all subsets of $\mathcal{A}$.
%\end{lemma}

%Now, from this, the following result about Kneser graphs follows directly.

%\begin{lemma}[Independent Sets of Kneser graphs]
%\label{lemma:ekr}
%$\alpha(G) = \binom{n-1}{k-1}$, and the maximum independent sets are found by taking an element $i \in [n]$ and considering all vertices using $i$.
%\end{lemma}
%\begin{proof}
%Consider an independent set $X$ in the Kneser graph $G$. $X$ is a set of $k$-sets chosen from $[n]$. Now all $k$-sets of $X$ pairwise intersect as the vertices are pairwise non-adjacent. Because each of these sets has size $k$, no set is contained within another. Since $n \geq 2k+1$, $|X| \leq \binom{n-1}{k-1}$ by Theorem~\ref{theorem:ekr}. Also, as $n \geq 2k+1$, when $X$ is a maximum independent set, then $X$ contains all vertices using a fixed element $i$.
%\end{proof}

\begin{theorem}[Erd\H{o}s-Ko-Rado \cite{EKR,EKRSimple}]
\label{theorem:ekr}
Let $G$ be $\Kneser(n,k)$ for some $n \geq 2k$. Then $\alpha(G) = \binom{n-1}{k-1}.$
%Let $n \geq 2k$ and let $\mathcal{A}$ be an independent set in the Kneser graph $G$. Then $|\mathcal{A}| \leq \binom{n-1}{k-1}$. %If $n > 2k$ and $|\mathcal{A}| = \binom{n-1}{k-1}$, then $\mathcal{A}$ is the set of all vertices of $G$ containing a fixed element $i \in [n]$.
\end{theorem}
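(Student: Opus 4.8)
The plan is to use the standard correspondence that an independent set of $G=\Kneser(n,k)$ is exactly an \emph{intersecting family} of $k$-subsets of $[n]$ (one in which any two members meet), and then to establish the two matching bounds. The lower bound is immediate: fix an element, say $1$, and let $\mathcal{S}=\{A\in\binom{[n]}{k}: 1\in A\}$ be the \emph{star} centred at $1$. Any two members of $\mathcal{S}$ share $1$, so $\mathcal{S}$ is independent in $G$, and $|\mathcal{S}|=\binom{n-1}{k-1}$; hence $\alpha(G)\ge\binom{n-1}{k-1}$.

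For the upper bound I would use Katona's cyclic-permutation argument. Arrange $[n]$ around a circle; there are $(n-1)!$ distinct cyclic orders. Relative to a fixed cyclic order, call a $k$-set an \emph{arc} if its elements occupy consecutive positions, so each cyclic order has exactly $n$ arcs. The heart of the proof is the claim that, for every fixed cyclic order, an intersecting family contains at most $k$ of its $n$ arcs. To prove this, suppose an arc $A$ --- say occupying circle positions $1,\dots,k$ --- lies in the family $\mathcal{F}$. Every arc other than $A$ that meets $A$ either has its first position among $2,\dots,k$, or wraps around so that its last position is among $1,\dots,k-1$; there are exactly $2k-2$ such arcs, and, using the hypothesis $n\ge 2k$, they fall into $k-1$ pairs of \emph{disjoint} arcs --- the arc with first position $i+1$ is disjoint from the arc with last position $i$, for $1\le i\le k-1$. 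Since $\mathcal{F}$ is intersecting it can contain $A$ together with at most one arc from each such pair, hence at most $1+(k-1)=k$ arcs in total. This is the step I would write out carefully, and it is exactly where $n\ge 2k$ is needed.

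A double count now finishes the proof. Let $\mathcal{F}$ be an intersecting family and count the pairs $(\sigma,A)$ in which $\sigma$ is a cyclic order of $[n]$ and $A\in\mathcal{F}$ is an arc of $\sigma$. By the claim there are at most $k\,(n-1)!$ such pairs. On the other hand, a fixed $k$-set is an arc in exactly $k!\,(n-k)!$ cyclic orders (order the block of $k$ elements in $k!$ ways, then cyclically arrange the resulting $n-k+1$ objects in $(n-k)!$ ways), so
$$|\mathcal{F}|\cdot k!\,(n-k)!\;\le\;k\,(n-1)!,$$
which rearranges to $|\mathcal{F}|\le \dfrac{k\,(n-1)!}{k!\,(n-k)!}=\binom{n-1}{k-1}$, as required.

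The main obstacle is the ``at most $k$ arcs'' claim; the rest is routine counting. If one preferred to avoid the circle method, an alternative is the original compression (shifting) argument of Erd\H{o}s, Ko and Rado: replace $\mathcal{F}$ by a shifted intersecting family of the same size and induct on $n$. I would present the cyclic-permutation proof, since it is shorter and self-contained.
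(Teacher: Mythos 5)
Your proof is correct, but note that the paper does not prove this theorem at all: it simply states it and cites the original Erd\H{o}s--Ko--Rado paper together with a reference labelled \texttt{EKRSimple}, which presumably is Katona's short proof. The argument you give \emph{is} Katona's cyclic-permutation proof, and you have the two delicate points right: the pairing of the arc beginning at position $i+1$ with the arc ending at position $i$ produces $k-1$ pairs of disjoint arcs precisely because $n \geq 2k$ (one needs $2k-1 < n$ for the wrap-around not to overlap), giving at most $k$ arcs of any fixed cyclic order in an intersecting family; and a fixed $k$-set is an arc of exactly $k!\,(n-k)!$ of the $(n-1)!$ cyclic orders, so the double count yields $|\mathcal{F}| \le k(n-1)!/(k!\,(n-k)!) = \binom{n-1}{k-1}$. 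Together with the star lower bound this gives $\alpha(\Kneser(n,k)) = \binom{n-1}{k-1}$, matching the cited statement.
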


The original Erd\H{o}s-Ko-Rado Theorem defines $\mathcal{A}$ as a set of $k$-sets in $[n]$, such that the $k$-sets of $\mathcal{A}$ pairwise intersect. Our formulation in terms of vertices in the Kneser graph is clearly equivalent. %, however. Also, the original theorem can be extended to allow subsets of $[n]$ using at most $k$ elements, as long as no subset is contained within another. This is not relevant to our results, so it has been omitted. 
We will use Theorem~\ref{theorem:ekr} when determining an upper bound for $\tw(G)$.

The second major result is by Pyber \cite{Pyber}. %and relates to \emph{cross-intersecting families}.
Let $\mathcal{A}$ and $\mathcal{B}$ be sets of vertices of the Kneser graph $G$, such that for all $v \in \mathcal{A}$ and $w \in \mathcal{B}$ the pair $vw$ is not an edge. Then we say the pair $(\mathcal{A}, \mathcal{B})$ are \emph{cross-intersecting families}.

\begin{theorem}[Erd\H{o}s-Ko-Rado for Cross-Intersecting Families \cite{Pyber, MatsuToku}]
\label{theorem:ekrx}
Let $n \geq 2k$ and let $(\mathcal{A},\mathcal{B})$ be cross-intersecting families in the Kneser graph $G$. Then $|\mathcal{A}||\mathcal{B}| \leq \binom{n-1}{k-1}^2$. If $n \geq 2k+1$ and $(\mathcal{A},\mathcal{B})$ are cross-intersecting families such that $|\mathcal{A}||\mathcal{B}| = \binom{n-1}{k-1}^2$, then $\mathcal{A} = \mathcal{B} = \{v | i \in v\}$ for a fixed element $i \in [n]$.
\end{theorem}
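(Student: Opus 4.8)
The plan is to derive the product bound from the spectral ratio bound for cross-independent sets, using the classical eigenvalue data of the Kneser graph, and then to extract the equality characterisation by tracking when each inequality is tight. Throughout we may assume $\mathcal{A}$ and $\mathcal{B}$ are non-empty, the bound being trivial otherwise. Recall that $G=\Kneser(n,k)$ is $d$-regular with $d=\binom{n-k}{k}$, and (e.g.\ since $G$ lies in the Johnson scheme) its adjacency matrix $M$ has eigenvalue $d$ on the all-ones vector $\mathbf 1$ and eigenvalues $(-1)^{i}\binom{n-k-i}{k-i}$ for $i=1,\dots,k$ on $\mathbf 1^{\perp}$; the smallest is $\lambda:=-\binom{n-k-1}{k-1}$, attained on the ``degree-one'' subspace $W_{1}$ of dimension $n-1$ spanned by the functions $S\mapsto\sum_{j\in S}c_{j}$ with $\sum_{j}c_{j}=0$. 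Since the values $\binom{n-k-i}{k-i}$ are non-increasing in $i$, and strictly decreasing once $n\ge 2k+1$, the operator norm of $M$ on $\mathbf 1^{\perp}$ equals $|\lambda|$, and is attained only on $W_{1}$ when $n\ge 2k+1$. A standard binomial identity (it reduces to $(n-k)\binom{n-k-1}{k-1}=k\binom{n-k}{k}$) gives $\frac{|\lambda|}{d+|\lambda|}\binom{n}{k}=\binom{n-1}{k-1}$.

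First I would prove the bound. Let $N=\binom{n}{k}$ and write the indicator vectors as $x_{\mathcal A}=\frac{|\mathcal A|}{N}\mathbf 1+u$ and $x_{\mathcal B}=\frac{|\mathcal B|}{N}\mathbf 1+v$ with $u,v\perp\mathbf 1$, so $\|u\|^{2}=|\mathcal A|(1-|\mathcal A|/N)$ and likewise for $v$. Cross-intersection says no pair $(A,B)\in\mathcal A\times\mathcal B$ is an edge, i.e.\ $x_{\mathcal A}^{\top}Mx_{\mathcal B}=0$; expanding and using $M\mathbf 1=d\mathbf 1$ yields $u^{\top}Mv=-\frac{d}{N}|\mathcal A||\mathcal B|$. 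Hence $\frac{d}{N}|\mathcal A||\mathcal B|=|u^{\top}Mv|\le|\lambda|\,\|u\|\,\|v\|=|\lambda|\sqrt{|\mathcal A||\mathcal B|}\,\sqrt{(1-|\mathcal A|/N)(1-|\mathcal B|/N)}$. Dividing by $\sqrt{|\mathcal A||\mathcal B|}>0$ and applying the arithmetic--geometric mean inequality twice, namely $\sqrt{(1-|\mathcal A|/N)(1-|\mathcal B|/N)}\le 1-\frac{|\mathcal A|+|\mathcal B|}{2N}\le 1-\frac{\sqrt{|\mathcal A||\mathcal B|}}{N}$, one gets $\sqrt{|\mathcal A||\mathcal B|}\,(d+|\lambda|)\le|\lambda|N$, that is $|\mathcal A||\mathcal B|\le\binom{n-1}{k-1}^{2}$ by the identity above.

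For the equality case I would now assume $n\ge 2k+1$ and $|\mathcal A||\mathcal B|=\binom{n-1}{k-1}^{2}$ and chase the equalities. Tightness in the two arithmetic--geometric mean steps forces $|\mathcal A|=|\mathcal B|$, hence $|\mathcal A|=|\mathcal B|=\binom{n-1}{k-1}$; tightness in $|u^{\top}Mv|\le|\lambda|\|u\|\|v\|$ forces $u$ and $v$ to lie in the unique norm-attaining eigenspace $W_{1}$ of $M$ on $\mathbf 1^{\perp}$ (this is where $n\ge 2k+1$ enters) and to be positively proportional; together with $\|u\|=\|v\|$ this gives $u=v$, so $\mathcal A=\mathcal B$. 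It remains to see that $x_{\mathcal A}\in\frac{k}{n}\mathbf 1+W_{1}$ being $\{0,1\}$-valued forces $\mathcal A$ to be a star: writing $x_{\mathcal A}(S)=\frac{k}{n}+\sum_{j\in S}c_{j}$ and comparing a $k$-set with one obtained by swapping a single element shows the values $c_{j}$ have all pairwise differences in $\{0,\pm\delta\}$, where $\delta$ is the gap between the two values of $x_{\mathcal A}$, so the $c_{j}$ take at most two values; and since $n\ge 2k+1$, for the map $S\mapsto\sum_{j\in S}c_{j}$ to take only two values the minority value must occur exactly once, say at $i$, whereupon the cardinality $\binom{n-1}{k-1}$ pins down $\mathcal A=\{v:i\in v\}$. (Alternatively, having reached ``$\mathcal A=\mathcal B$ is an intersecting family of size $\binom{n-1}{k-1}$'' one may simply invoke the equality case of Theorem~\ref{theorem:ekr}.)

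I expect the main obstacle to be the equality analysis rather than the bound: it requires knowing the eigenspaces of the Kneser graph explicitly and checking that $W_{1}$ uniquely attains the operator norm on $\mathbf 1^{\perp}$ precisely when $n\ge 2k+1$ --- this genuinely fails at $n=2k$, which is exactly why the hypothesis $n\ge 2k+1$ is needed for the characterisation --- together with the short combinatorial step identifying $\{0,1\}$-valued degree-one functions with stars. A spectral-free alternative is to left-compress both families (which preserves cross-intersection and both cardinalities), then apply the Kruskal--Katona theorem to $\{[n]\setminus B:B\in\mathcal B\}$, whose $k$-shadow must avoid $\mathcal A$, to obtain $|\mathcal A|\le\binom{n}{k}-\binom{x}{k}$ where $\binom{x}{n-k}=|\mathcal B|$, and then maximise $\big(\binom{n}{k}-\binom{x}{k}\big)\binom{x}{n-k}$ over real $x\ge n-k$, with the equality case coming from the equality case of Kruskal--Katona; but the required one-variable optimisation looks at least as delicate as the spectral equality analysis.
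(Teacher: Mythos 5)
The paper never proves Theorem~\ref{theorem:ekrx}: it is imported verbatim from Pyber and Matsumoto--Tokushige, whose arguments are combinatorial (compression/shifting plus the Kruskal--Katona shadow bound, i.e.\ the same ``first sets minimise the shadow'' machinery the paper records as Lemma~\ref{lemma:minshadow}). Your spectral proof is therefore a genuinely different route, and it is essentially correct: the computation $u^{\top}Mv=-\tfrac{d}{N}|\mathcal A||\mathcal B|$, the operator-norm bound on $\mathbf 1^{\perp}$, the two AM--GM steps forcing $|\mathcal A|=|\mathcal B|$ at equality, the identity $\tfrac{|\lambda|}{d+|\lambda|}\binom{n}{k}=\binom{n-1}{k-1}$, and the strict decrease of $\binom{n-k-i}{k-i}$ in $i$ exactly when $n\ge 2k+1$ (which isolates $W_1$ and hence yields $u=v$, so $\mathcal A=\mathcal B$) all check out. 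What your approach buys is a short, self-contained proof of the product bound together with a transparent mechanism for the uniqueness statement; what it costs is the explicit eigenvalue/eigenspace data of the Johnson scheme, which the paper's combinatorial framework never needs. Two minor caveats. First, your parenthetical fallback of ``invoking the equality case of Theorem~\ref{theorem:ekr}'' is not available as stated, since the paper's Theorem~\ref{theorem:ekr} records only the value of $\alpha(G)$, not the classical uniqueness of extremal intersecting families (true for $n>2k$, but you would have to quote it separately). Second, your Boolean degree-one endgame is compressed: the two-value analysis shows that one of the two values of $c_j$ occurs exactly once, and when that singleton carries the \emph{smaller} value the resulting function $\tfrac{2k}{n}-[i\in S]$ must be ruled out (it is $\{0,1\}$-valued only when $n=2k$); once that case is discarded the star drops out directly, and the cardinality count is not even needed. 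Neither caveat affects the correctness of the argument.
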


As with Theorem~\ref{theorem:ekr}, the original formulation by Pyber of Theorem~\ref{theorem:ekrx} is more general. We have given the result in an equivalent form that is sufficient for our requirements. %The first statement in the theorem was originally proven by Pyber \cite{Pyber}. Matsumoto and Tokushige \cite{MatsuToku} proved the statement regarding the maximum choice of $\mathcal{A}$ and $\mathcal{B}$.

%\begin{theorem}[Erd\H{o}s-Ko-Rado for Cross-Intersecting Families]
%\label{theorem:ekrx}
%Let $\mathcal{A}$ and $\mathcal{B}$ be families of distinct subsets of $[n]$, such that
%\begin{itemize}
%\item each is of size at most $k$ (for $\mathcal{A}$) or of size at most $l$ (for $\mathcal{B}$),
%\item no subset in $\mathcal{A}$ is contained within another subset of $\mathcal{A}$ (and the equivalent holds for $\mathcal{B}$),
%\item any subset in $\mathcal{A}$ and subset in $\mathcal{B}$ intersect.
%\end{itemize}
%Then $|\mathcal{A}||\mathcal{B}| \leq \binom{n-1}{k-1}\binom{n-1}{l-1}$ whenever $n \geq 2k+l-2$ and $k \geq l$.
%\end{theorem}

%Now, in our case we can simply set $k=l$ and assume that all subsets have size exactly $k$, as we did in Lemma~\ref{lemma:ekr}. This second assumption does not actually change anything---Pyber \cite{Pyber} shows that all other cases are equivalent to the uniform size case. Also, given that $k=l$, it is actually sufficient to assume that $n \geq 2k$, see Pyber \cite{Pyber} again. Matsumoto and Tokushige \cite{MatsuToku} extended this to show that when $n \geq 2k+1$, the maximum cross-intersecting families are of the form $\mathcal{A} = \mathcal{B} = \{ v : v \in \binom{[n]}{k}, i \in v \}$ for some fixed $i \in [n]$. (Note their result is slightly more general, this is just the result when $k=l$.)

Let $X$ be a $\frac{2}{3}$-separator and $A$,$B$ the parts of the vertex partition of $G-X$ as in Lemma~\ref{lemma:part}. Now for all $v \in A$ and $w \in B$, $v$ and $w$ are in different components and as such are non-adjacent. So $(A,B)$ are cross-intersecting families. We know $|A| = c|G-X|$ where $\frac{1}{3} \leq c \leq \frac{1}{2}$. By Theorem~\ref{theorem:ekrx}, it follows that $c(1-c)|G-X|^{2} \leq \binom{n-1}{k-1}^2$. It follows that $|G-X| \leq \frac{3}{\sqrt{2}}\binom{n-1}{k-1}$. (We leave the precise calculation to the reader.) This gives a lower bound on $|X|$, and as such a lower bound on the treewidth (by Theorem~\ref{theorem:sep}). Hence $\tw(G) \geq \binom{n}{k} - \frac{3}{\sqrt{2}}\binom{n-1}{k-1} -1$.

However, note that the parts $A$ and $B$ of $V(G-X)$ are vertex disjoint, but that the definition of a pair of cross-intersecting families does not require this. In fact, Theorem~\ref{theorem:ekrx} shows that in the case where $|\mathcal{A}||\mathcal{B}|$ is maximised, $\mathcal{A}=\mathcal{B}$. We show we can do better than the above na\"{i}ve lower bound on $\tw(G)$ when $\mathcal{A}$ and $\mathcal{B}$ are disjoint. 

%Recall the definition of separators in Section~\ref{section:sep}, including the vertex sets $A$ and $B$ in $V(G)-X$. If $v \in A$ and $w \in B$, then there is no edge $vw$, and $v$ and $w$ must intersect. So $A,B$ are cross-intersecting families. However, there are a few issues. Firstly, the results about cross-intersecting families do not assume that $\mathcal{A}$ and $\mathcal{B}$ are disjoint; in fact, the maximum case as shown above occurs when $\mathcal{A} = \mathcal{B}$. This implies we should be able to do better than the general bound in our case, as $A$ and $B$ are disjoint sets of vertices.

%Also, what we really desire is an upper bound on $|Y| = |A| + |B|$, not an upper bound on $|A||B|$. Given Proposition~\ref{remark:abbounds}, we can show that $|Y| \leq \frac{\sqrt{3}}{2}\binom{n-1}{k-1} \approx 2.1\binom{n-1}{k-1}$. For Theorem~\ref{theorem:main} we improve $\frac{\sqrt{3}}{2}$ to $1$, and for Theorem~\ref{theorem:secondary}, we improve it slightly to $2$.

Before considering our final preliminary, we provide the following definitions. %Finally, we shall consider some of the tools used by Pyber \cite{Pyber} and Matsumoto and Tokushige \cite{MatsuToku}, which we shall use to prove our results.
%\begin{definition}
%\label{definition:colex}
Consider all of the $a$-sets in $[b]$. Define the \emph{colexicographic} or \emph{colex} ordering on the $a$-sets as follows: if $x$ and $y$ are distinct $a$-sets, then $x < y$ when $\max(x-y) < \max(y-x)$. This is a strict total order. %Note that since $x \neq y$, it follows $x-y$ and $y-x$ are non-empty and disjoint, so this is well-defined.
%\end{definition}
%
%\begin{definition}
%\label{definition:first}
A set $X$ of $a$-sets in $[b]$ is \emph{first} if $X$ consists of the first $|X|$ $a$-sets in the colex ordering of all the $a$-sets in $[b]$.
%\end{definition}

%\begin{remark}
%\label{remark:firstsets}
Now consider the colex ordering of $a$-sets in $[b]$. All of the $a$-sets in $[i]$ (where $i<b$) come before any $a$-set containing an element greater than or equal to $i+1$. To see this, note if $x$ is an $a$-set in $[i]$ and $y$ is an $a$-set with $j \in y$ such that $j \geq i+1$, then $\max(x-y) \leq \max(x) \leq i$, and $\max(y-x) \geq j \geq i+1$ as $j \in y-x$. %Similarly, if $x$ and $y$ share their $j$ maximum elements, then the same principle holds on the remaining elements. That is, if $x$ and $y$ both use $n$, but the rest of $x$ is contained with $\{1, \dots, i\}$ and $y$ uses $i+1$, then $x < y$. 
We will use this when determining the make-up of first sets in Section~\ref{section:lbone}.  
%\end{remark}

%\begin{definition}
%\label{definition:shadow}
Let $X$ be a set of $a$-sets in $[b]$. For $c \leq a$, the \emph{$c$-shadow of $X$} is the set $\{ x : |x| = c,$ and $\exists y \in X$ such that $x \subseteq y\}$. That is, the $c$-shadow contains all $c$-sets that are contained within $a$-sets of $X$.
%\end{definition}
%
%\begin{definition} %NOT SURE IF REQUIRED, or exactly the best place to put this
If $x$ is an $a$-set in $[b]$, let the \emph{complement} of $x$ be the $(b-a)$-set $y = [b]-x$. If $X$ is a set of $a$-sets on $[b]$, then the \emph{complement} of $X$ is $\overline{X} := \{ y: y$ is the complement of some $x \in X \}$. Note $|X| = |\overline{X}|$.
%\end{definition}

\begin{lemma}[A first set minimises the shadow \cite{Kruskal,Katona} (see \cite{Frankl2} for a short proof)]
\label{lemma:minshadow}
Let $X$ be a set of $a$-sets on $[b]$, $c \leq a$ and $S$ be the $c$-shadow of $X$. Suppose $|X|$ is fixed but $X$ is not. Then $|S|$ is minimised when $X$ is first.
\end{lemma}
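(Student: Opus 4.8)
The statement is the Kruskal--Katona theorem, and the plan is to prove it by the classical method of \emph{compressions}, after first reducing to the case of the immediate (that is, $(a-1)$-) shadow. Throughout, write $\partial Y$ for the immediate shadow of a family $Y$, so that the $c$-shadow of $X$ is $\partial^{a-c}X$, and let $X^{*}$ denote the first set of a given cardinality. From the observation in the excerpt one gets the recursive description of first sets: if $m\le\binom{b-1}{a}$ the first $m$ $a$-sets of $[b]$ are just the first $m$ $a$-sets of $[b-1]$, while if $m>\binom{b-1}{a}$ they consist of all of $\binom{[b-1]}{a}$ together with $\{C\cup\{b\}:C\in\mathcal{C}\}$, where $\mathcal{C}$ is the set of first $m-\binom{b-1}{a}$ $(a-1)$-sets of $[b-1]$. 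A short induction on $b$ using this description shows that the immediate shadow of a first set is again a first set.

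Granting this, the reduction to $c=a-1$ is immediate: if the lemma holds for immediate shadows, then $|\partial X|\ge|\partial X^{*}|$ and $\partial X^{*}$ is first; since a first set of smaller cardinality is contained in one of larger cardinality and $\partial$ is monotone under inclusion, iterating $a-c$ times yields $|\partial^{a-c}X|\ge|\partial^{a-c}X^{*}|$, which is the claim for the $c$-shadow.

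For the immediate shadow, introduce for each $i<j$ the $(i,j)$-compression $C_{ij}$, defined on a set $A$ by $C_{ij}(A)=(A\setminus\{j\})\cup\{i\}$ when $j\in A$, $i\notin A$ and $(A\setminus\{j\})\cup\{i\}\notin X$, and $C_{ij}(A)=A$ otherwise, and put $C_{ij}(X)=\{C_{ij}(A):A\in X\}$. Two facts are needed: $|C_{ij}(X)|=|X|$, which is immediate, and $|\partial C_{ij}(X)|\le|\partial X|$, which follows from a short case analysis showing that $C_{ij}$ sends each $(a-1)$-set of $\partial X$ either into $\partial C_{ij}(X)$ or onto a set already lying there, the cases being organised by whether the shadow set contains $i$, $j$, both or neither and by which $a$-set of $X$ witnesses it. Since any compression that changes $X$ strictly decreases $\sum_{A\in X}\sum_{t\in A}t$, repeatedly applying $C_{ij}$ over all pairs $i<j$ terminates at a family $X'$ with $|X'|=|X|$ and $|\partial X'|\le|\partial X|$ that is stable under every $C_{ij}$; call such a family \emph{compressed}.

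It remains to show $|\partial X'|\ge|\partial X^{*}|$ for a compressed family $X'$. Split $X'$ at its largest coordinate $b$: let $X'_{0}=\{A\in X':b\notin A\}$ and $X'_{1}=\{A\setminus\{b\}:A\in X',\,b\in A\}$, the latter regarded as a family of $(a-1)$-sets of $[b-1]$. Because $X'$ is compressed one checks $X'_{1}\subseteq\partial X'_{0}$, and a direct computation then gives $|\partial X'|=|\partial X'_{0}|+|\partial X'_{1}|$ with both shadows taken in $[b-1]$. Applying the induction hypothesis on $b$ to $X'_{0}$ and to $X'_{1}$, and comparing with the recursive decomposition of $X^{*}$ recalled above, reduces everything to an elementary but slightly delicate monotonicity inequality for the shadow sizes of first sets --- essentially, that among all ways of splitting a prescribed total cardinality between an $a$-uniform piece and an $(a-1)$-uniform piece, the greedy split realised by $X^{*}$ minimises the sum of the two shadow sizes. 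I expect this final step --- the clean splitting of a compressed family along its top coordinate together with the accompanying numerical inequality --- to be the main obstacle, whereas the two properties of a single compression, though requiring care, are routine. (A short self-contained treatment of essentially this argument is in \cite{Frankl2}.)
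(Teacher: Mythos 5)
This lemma is the Kruskal--Katona theorem, which the paper deliberately does not prove but imports from \cite{Kruskal,Katona,Frankl2}; so there is no in-paper argument to compare against, and the question is only whether your proof stands on its own. Most of your outline is the standard shifting proof and is sound: the reduction from the $c$-shadow to the immediate shadow via the fact that the shadow of a first set is first, the two properties of a single compression $C_{ij}$, the termination argument, and the identity $|\partial X'|=|\partial X'_0|+|\partial X'_1|$ for a compressed family (which does follow from $X'_1\subseteq\partial X'_0$). You also correctly avoid the common trap of asserting that a family stable under all $C_{ij}$ is an initial segment of colex --- it need not be (e.g.\ $\{12,13,14\}$ is stable but the first three $2$-sets are $\{12,13,23\}$) --- which is exactly why the split-and-induct step exists.

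The genuine gap is the final numerical step, which you leave unproved and, more importantly, state in a form that is false. The claim ``among all ways of splitting a prescribed total cardinality between an $a$-uniform piece and an $(a-1)$-uniform piece, the greedy split minimises the sum of the two shadow sizes'' fails without constraints: for $a=2$ and total cardinality $1$, the greedy split $(m_0,m_1)=(1,0)$ gives shadow sum $2$, whereas $(0,1)$ gives $0+1=1$. What rescues the argument is precisely the relation $X'_1\subseteq\partial X'_0$, i.e.\ $|X'_1|\le|\partial X'_0|$, used \emph{quantitatively} in the comparison and not merely to establish additivity; after you replace $|\partial X'_0|$ and $|\partial X'_1|$ by their induction-hypothesis lower bounds, that constraint is no longer visible, and the unconstrained inequality you appeal to is simply untrue. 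The clean repair (Frankl's) is to prove the Lov\'asz form ($|X|\ge\binom{x}{a}$ implies $|\partial X|\ge\binom{x}{a-1}$) and split into cases: if $|X'_1|\ge\binom{x-1}{a-1}$, bound $|\partial X'_0|\ge|X'_1|$ directly and apply the hypothesis only to $X'_1$; otherwise $|X'_0|>\binom{x-1}{a}$ and one argues through $X'_0$. Alternatively, Daykin's set-compressions $C_{U,V}$ with $|U|=|V|$ turn a fully compressed family into an actual initial segment and bypass the numerical inequality entirely. As written, the crux of the theorem is exactly the step you defer.
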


%This lemma follows from the Kruskal-Katona Theorem (\cite{Kruskal,Katona}, but see \cite{Frankl2} for a short proof). 
This idea is also used by Pyber \cite{Pyber} and Matsumoto and Tokushige \cite{MatsuToku}. Intuitively, the shadow $S$ should be minimised whenever the $a$-sets of $X$ ``overlap" as much as possible, so that each $c$-set in $S$ is a subset of as many $a$-sets as possible.

%I think we are good up to here.
\section{Upper Bound for Treewidth}
\label{section:upper}
This section proves the upper bounds on $\tw(G)$ in Theorems~\ref{theorem:main} and \ref{theorem:ktwo}.

In both Theorem~\ref{theorem:main} and~\ref{theorem:ktwo}, the upper bound is almost always $\binom{n-1}{k}-1$. The only exceptions are the trivial cases (when $n \leq 2k$), and the case when $k=2$ and $n=5$, which is the Petersen graph. The Petersen graph is well-known to have treewidth $4$ (\cite{peterstree}, for example). What follows is a general upper bound on the treewidth of any graph, which is sufficient to prove the remaining cases.

\begin{lemma}
\label{lemma:gub}
If $H$ is any graph, then $\tw(H) \leq \max\{\Delta(H), |V(H)| - \alpha(H) - 1\}$.
\end{lemma}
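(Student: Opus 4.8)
The plan is to build a tree decomposition of $H$ explicitly from a maximum independent set. Let $I$ be an independent set of $H$ with $|I| = \alpha(H)$, and let $K = V(H) \setminus I$, so $|K| = |V(H)| - \alpha(H)$. The idea is to take a ``core'' bag consisting of all of $K$, and then, for each vertex $v \in I$, attach a leaf bag that contains $K$ together with $v$. So $T$ is a star with centre $x_0$ and one leaf $x_v$ for each $v \in I$; set $B_{x_0} = K$ and $B_{x_v} = K \cup \{v\}$. I would then verify the two tree-decomposition axioms: every vertex of $K$ lies in every bag (so its bags form a connected subtree), and each $v \in I$ lies only in $B_{x_v}$; for edges, any edge with both endpoints in $K$ lies in $B_{x_0}$, and any edge $uv$ with $v \in I$ has $u \in K$ (since $I$ is independent), hence lies in $B_{x_v}$. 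Thus $(T, (B_x))$ is a valid tree decomposition, and its width is $\max\{|K| - 1, |K|\} = |K| = |V(H)| - \alpha(H)$.

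This already gives $\tw(H) \le |V(H)| - \alpha(H)$, which is slightly weaker than the claimed bound (we want $|V(H)| - \alpha(H) - 1$ in the second term of the max). To recover the extra $-1$, I would refine the construction: pick one fixed vertex $u \in K$ and instead use the path/star where the centre bag is $K$ but each leaf bag is $(K \setminus \{u\}) \cup \{v\}$ for $v \in I$ — no, that breaks connectivity for $u$. The cleaner fix is to observe that we only need $u$ in a leaf bag when $u$ is adjacent to the corresponding $v$; but to keep $u$'s bags connected we route through the centre. Instead, the standard trick is: if some vertex $v_0 \in I$ has a neighbour, fold $v_0$ into the ``core'': use bags $B_{x_0} = K \cup \{v_0\}$ of size $|K|+1$ won't help either. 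The right move is to handle the two terms of the max separately and take whichever construction is better — but since we want a single bound, note that when $\Delta(H) \ge |V(H)| - \alpha(H) - 1$ there is nothing to prove beyond the $\Delta(H)$ term (handled below), and when $\Delta(H) < |V(H)| - \alpha(H) - 1$ we may delete from each leaf bag $B_{x_v}$ all vertices of $K$ not adjacent to $v$, leaving a bag of size $\deg(v) + 1 \le \Delta(H) + 1 \le |V(H)| - \alpha(H) - 1$; connectivity of each $u \in K$ is preserved because $u$ still appears in the centre bag $B_{x_0} = K$, which has size $|K| = |V(H)| - \alpha(H)$, i.e. width $|V(H)| - \alpha(H) - 1$. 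So in all cases the width is at most $\max\{\Delta(H), |V(H)| - \alpha(H) - 1\}$.

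The one remaining case is when $H$ has no independent set interacting nicely, or more precisely when the centre bag $K$ itself is the binding constraint but $|K| - 1$ could exceed $\max\{\Delta(H), |V(H)| - \alpha(H) - 1\}$ — but it cannot, since $|K| - 1 = |V(H)| - \alpha(H) - 1$ exactly. So the centre bag is always within budget, and the only thing to check is that the (possibly trimmed) leaf bags are too, which the degree bound handles. I would also record the degenerate cases: if $\alpha(H) = |V(H)|$ then $H$ has no edges and $\tw(H) = 0$, consistent with the formula; if $\Delta(H) = 0$ likewise.

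I do not expect a serious obstacle here — the main thing to get right is the bookkeeping that trimming the leaf bags down to $\deg(v)+1$ does not destroy the connectivity condition for vertices of $K$, which is why the untrimmed centre bag $B_{x_0} = K$ must be kept. A subtle point worth stating carefully is that $v \in I$ appears in exactly one bag, so its subtree is trivially connected, and that every edge incident to $I$ goes into $K$, so it survives in the corresponding (trimmed) leaf bag. Everything else is immediate.
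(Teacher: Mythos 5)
Your final construction — a star whose centre bag is $V(H)\setminus I$ and whose leaf bags are $N(v)\cup\{v\}$ for each $v$ in a maximum independent set $I$ — is exactly the paper's tree decomposition, and your verification of the axioms and the width bound matches the paper's. The detour through the untrimmed leaf bags and the unnecessary case split on $\Delta(H)$ versus $|V(H)|-\alpha(H)-1$ are just exploratory exposition; the trimmed construction works uniformly, as you note at the end.
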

\begin{proof}
Let $\alpha := \alpha(H)$. We shall construct a tree decomposition with underlying tree $T$, where $T$ is a star with $\alpha(H)$ leaves. Let $R$ be the bag indexed by the central node of $T$, and label the other bags $B_1, \dots, B_{\alpha}$. Let $X := \{x_1, \dots x_\alpha \}$ be a maximum independent set in $H$. Let $R:= V(H)-X$ and $B_i := N(x_{i}) \cup \{x_i\}$ for all $i \in \{1, \dots, \alpha\}$. We now show this is a tree decomposition:

Any vertex not in $X$ is contained in $R$. Given the structure of the star, any induced subgraph containing the central node is connected. Alternatively, if a vertex is in $X$, then it appears only in bags indexed by leaves. However, since $X$ is an independent set, $x_i \in X$ appears only in $B_i$, not in any other bag $B_j$. A single node is obviously connected. 
If $vw$ is an edge of $H$, then at most one of $v$ and $w$ is in $X$. Say $v = x_i \in X$. Then $v,w$ both appear in the bag $B_i$. Otherwise neither vertex is in $X$, and both vertices appear in $R$.

So this is a tree decomposition. The size of $R$ is $|V(H)| - \alpha(H)$. The size of $B_{i}$ is the degree of $x_i$, plus one, which is at most $\Delta(H) + 1$. From here our lemma is proven.
\end{proof}

We now consider this result for the Kneser graph itself.
\begin{lemma}
If $G$ is a Kneser graph with $k \geq 2$ and $n \geq 2k+1$, then $\tw(G) \leq \binom{n}{k-1} - 1$.
\end{lemma}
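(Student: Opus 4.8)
The plan is to apply Lemma~\ref{lemma:gub} to $H = G$, so I just need to bound the two quantities $\Delta(G)$ and $|V(G)| - \alpha(G) - 1$ by $\binom{n}{k-1} - 1$.

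First I would handle the independence-number term. We have $|V(G)| = \binom{n}{k}$ and, by the Erd\H{o}s--Ko--Rado Theorem (Theorem~\ref{theorem:ekr}), $\alpha(G) = \binom{n-1}{k-1}$ since $n \geq 2k+1 \geq 2k$. Using Pascal's identity $\binom{n}{k} = \binom{n-1}{k} + \binom{n-1}{k-1}$, we get $|V(G)| - \alpha(G) - 1 = \binom{n}{k} - \binom{n-1}{k-1} - 1 = \binom{n-1}{k} - 1$. A second application of Pascal's identity gives $\binom{n-1}{k} \leq \binom{n}{k-1}$ when $k \geq 2$, or one can note directly that $\binom{n-1}{k} \leq \binom{n}{k-1}$ reduces to $n - k \leq k \cdot$ something; in any case $\binom{n-1}{k} - 1 \leq \binom{n}{k-1} - 1$, so this term is dominated by the claimed bound. (In fact this term equals the treewidth in Theorem~\ref{theorem:main}, so it is exactly the quantity we care about, and the looser bound $\binom{n}{k-1}-1$ is what Lemma~\ref{lemma:gub} cleanly delivers here.)

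Second I would handle the maximum-degree term. The Kneser graph $\Kneser(n,k)$ is vertex-transitive, and the neighbours of a fixed vertex $v \in \binom{[n]}{k}$ are exactly the $k$-sets contained in $[n] \setminus v$, of which there are $\binom{n-k}{k}$. So $\Delta(G) = \binom{n-k}{k}$. It remains to observe that $\binom{n-k}{k} \leq \binom{n}{k-1} - 1$ for $n \geq 2k+1$ and $k \geq 2$. This is a routine inequality: $\binom{n-k}{k}$ counts $k$-subsets of an $(n-k)$-element set while $\binom{n}{k-1}$ counts $(k-1)$-subsets of an $n$-element set, and for $n$ in this range the latter is strictly larger (one can check the small case $n = 2k+1$, where $\binom{k+1}{k} = k+1$ and $\binom{2k+1}{k-1}$ is much larger, and note monotonicity in $n$). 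I would leave the precise verification to a one-line computation.

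The proof is then immediate: by Lemma~\ref{lemma:gub}, $\tw(G) \leq \max\{\Delta(G),\, |V(G)| - \alpha(G) - 1\} = \max\{\binom{n-k}{k},\, \binom{n-1}{k}-1\} \leq \binom{n}{k-1} - 1$. I do not expect any real obstacle here; the only mild care needed is the two elementary binomial inequalities, and making sure the EKR hypothesis $n \geq 2k$ is met (it is, since we assume $n \geq 2k+1$). The slack between $\binom{n-1}{k}-1$ and $\binom{n}{k-1}-1$ is harmless because this lemma is only a stepping stone; the sharper upper bound $\binom{n-1}{k}-1$ needed for Theorem~\ref{theorem:main} presumably comes from the explicit tree decomposition constructed elsewhere in Section~\ref{section:upper} rather than from the generic Lemma~\ref{lemma:gub}.
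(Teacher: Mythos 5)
Your setup is exactly the paper's: apply Lemma~\ref{lemma:gub}, compute $\Delta(G)=\binom{n-k}{k}$, and use Erd\H{o}s--Ko--Rado to get $|V(G)|-\alpha(G)-1=\binom{n-1}{k}-1$. That part is correct. But both of the ``routine'' comparisons you then assert are false. First, $\binom{n-1}{k}\leq\binom{n}{k-1}$ fails badly: for $n=6$, $k=2$ it reads $10\leq 6$, and in general $\binom{n-1}{k}$ grows like $n^{k}/k!$ while $\binom{n}{k-1}$ grows like $n^{k-1}/(k-1)!$, so the inequality goes the wrong way for essentially all $n$ in range; Pascal's identity gives nothing of the sort. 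Second, $\binom{n-k}{k}\leq\binom{n}{k-1}-1$ also fails ($n=6$, $k=2$ gives $6\leq 5$), and your ``monotonicity in $n$'' heuristic is backwards for the same degree-counting reason. Neither gap can be patched, because the statement as printed is itself false: Theorem~\ref{theorem:ktwo} gives $\tw(\Kneser(6,2))=\binom{5}{2}-1=9>\binom{6}{1}-1=5$.

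The resolution is that the bound in the lemma statement is a typo for $\binom{n-1}{k}-1$: that is what the paper's own proof concludes (``Since $k\geq 2$, $\tw(G)\leq\binom{n-1}{k}-1$, as required''), and it is the bound the rest of the paper needs. With that correction your argument closes immediately and coincides with the paper's: the only inequality required is $\binom{n-k}{k}\leq\binom{n-1}{k}-1$, which holds because $k\geq 2$ gives $n-1>n-k$ and hence $\binom{n-1}{k}>\binom{n-k}{k}$. Your closing speculation that the sharp bound $\binom{n-1}{k}-1$ must come from a separate explicit construction is also off the mark --- Lemma~\ref{lemma:gub} already delivers it.
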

\begin{proof}
%By Lemma~\ref{lemma:gub}, $\tw(G) \leq \max\{\Delta(G), |V(G)| - \alpha(G) - 1\}$. Since $G$ is regular, $\Delta(G) = \binom{n-k}{k}$ by Proposition~\ref{prop:regular}. Also, by Lemma~\ref{lemma:ekr}, $\alpha(G) = \binom{n-1}{k-1}$ and $|V(G)| = \binom{n}{k}$. Thus $\tw(G) \leq \max\{\binom{n-k}{k}, \binom{n-1}{k} - 1\} = \binom{n-1}{k} -1$, since $k \geq 2$ and $n \geq 2k+1$.
By Lemma~\ref{lemma:gub} and Theorem~\ref{theorem:ekr}, and since $n \geq 2k+1$, $$\tw(G) \leq \max\left\{\Delta(G), |V(G)| - \alpha(G) - 1\right\} = \max\left\{\binom{n-k}{k}, \binom{n}{k} - \binom{n-1}{k-1} - 1\right\}.$$ Since $k \geq 2$,  $\tw(G) \leq \binom{n-1}{k}-1$, as required.
\end{proof}

%Now we consider the $k=2$ and $n=5$ case. Note that this graph is in fact the famous Petersen graph. By showing $\tw(G) \leq 4$ in this case, we are showing that $\tw(G) \leq \binom{n-1}{k} - 2$, that is, one less than in the previous cases.

%\begin{lemma}
%Let $G$ be the Petersen graph, the Kneser graph when $k=2,n=5$. Then $\tw(G) \leq 4$.
%\end{lemma}
%\begin{proof}
%We simply construct a tree decomposition $(T, (B_{x : x \in V(T)}))$ for $G$ where each bag contains at most five vertices.
%Root $T$ at a node $r$, and let $r$ have three children $x,y,z$. The nodes $x$ and $y$ themselves have children $a$ and $b$ respectively. These six nodes are the only nodes of $T$. Construct the bags as follows:
%\begin{itemize}
%\item $B_r = \{(3,4),(1,5),(2,5),(3,5),(4,5)\}$
%\item $B_x = \{(1,4),(1,5),(2,5),(3,5),(4,5)\}$
%\item $B_y = \{(2,4),(1,5),(2,5),(3,5),(4,5)\}$
%\item $B_z = \{(1,2),(3,4),(3,5),(4,5)\}$
%\item $B_a = \{(2,3),(1,4),(1,5),(4,5)\}$
%\item $B_b = \{(1,3),(2,4),(2,5),(4,5)\}$
%\end{itemize}
%By inspection, this is a valid tree decomposition of $G$.
%\end{proof}

%This proves all required upper bounds.

%\section{Lower Bound for Treewidth in Theorem~\ref{theorem:main}}
\section{Separators in the Kneser Graph}
\label{section:lbone}
To complete the proof of Theorem~\ref{theorem:main}, it is sufficient to prove a lower bound on the treewidth. The following lemma, together with Theorem~\ref{theorem:sep}, provides this. It is the heart of the proof of Theorem~\ref{theorem:turan}.

\begin{lemma}
\label{lemma:keylemma}
Let $X$ be a $p$-separator of the Kneser graph $G$. If $n \geq \max(4k^2 - 4k +3, \frac{1}{1-p}(k^2 -1)+2)$, then $|X| \geq \binom{n-1}{k}$.
\end{lemma}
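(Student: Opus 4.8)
Since $|V(G)|=\binom nk$ and $\binom nk-\binom{n-1}{k}=\binom{n-1}{k-1}$, the statement $|X|\ge\binom{n-1}{k}$ is equivalent to $|V(G-X)|\le\binom{n-1}{k-1}$, so I would prove the latter. First apply Lemma~\ref{lemma:part} to the $p$-separator $X$: this splits $V(G-X)$ into parts $A,B$ with no edge between them and with $(1-p)|V(G-X)|\le|A|\le\tfrac12|V(G-X)|\le|B|\le p|V(G-X)|$. In the Kneser graph non-adjacency is intersection, so $(A,B)$ is a pair of cross-intersecting families; crucially, $A$ and $B$ are also \emph{disjoint}. Write $N:=|V(G-X)|=|A|+|B|$; the goal is $N\le\binom{n-1}{k-1}$. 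If $V(G-X)=\emptyset$ this is trivial, so assume both parts are non-empty. One easy case can be dispatched at once: if $|A|>\binom{n-1}{k-1}$ then $|B|\ge|A|>\binom{n-1}{k-1}$, so $|A||B|>\binom{n-1}{k-1}^2$, contradicting Theorem~\ref{theorem:ekrx}; hence $|A|\le\binom{n-1}{k-1}$. Note that Theorem~\ref{theorem:ekrx} on its own gives only $|A||B|\le\binom{n-1}{k-1}^2$, which together with $(1-p)N\le|A|\le N/2$ yields merely $N\le\tfrac{1}{\sqrt{p(1-p)}}\binom{n-1}{k-1}\le\tfrac{3}{\sqrt2}\binom{n-1}{k-1}$ — the na\"{i}ve bound. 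The gain must come from disjointness.

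\textbf{The core step} is to pass to complements and invoke the Kruskal--Katona theorem. Let $\overline B$ be the family of $(n-k)$-sets complementary to $B$, so $|\overline B|=|B|$. Since every $a\in A$ meets every $b\in B$, no $a\in A$ is contained in any $\overline b\in\overline B$; that is, $A$ is disjoint from the $k$-shadow $S$ of $\overline B$ (this shadow makes sense as $n\ge 2k+1$ forces $k<n-k$). Since $A$ is also disjoint from $B$, we get $|A|\le\binom nk-|S\cup B|\le\binom nk-|S|$. Now Lemma~\ref{lemma:minshadow} says $|S|$ is at least the size of the $k$-shadow of the \emph{first} $|B|$ many $(n-k)$-sets in $[n]$; writing $|B|=\binom{y}{n-k}$ for a real $y$ this is $\binom yk$, and — more usefully — if $|B|\ge\binom{n-1}{k-1}$ the first-set structure described in Section~\ref{section:lbone} gives $|S|\ge\binom{n-1}{k}+\sigma$, where $\sigma$ is the size of the $(k-1)$-shadow of the first $\bigl(|B|-\binom{n-1}{k-1}\bigr)$ many $(n-k-1)$-sets in $[n-1]$. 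Hence $|A|\le\binom{n-1}{k-1}-\sigma$ in that regime. Combining this with $|A|\ge(1-p)N=\tfrac{1-p}{p}|B|$ and $|A|\le|B|$ produces an inequality purely in $|B|$ (equivalently $y$, or the excess $m:=|B|-\binom{n-1}{k-1}$), $n$, $k$ and $p$, which I would show is violated whenever $N>\binom{n-1}{k-1}$.

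\textbf{The main obstacle} is precisely this final calculation, which has two regimes. When $|B|$ is comfortably larger than $\binom{n-1}{k-1}$, the term $\sigma$ (or $\binom yk$) is already large enough that $|A|\le\binom nk-|S|$ is too small to be consistent with $|A|\ge\tfrac{1-p}{p}|B|\ge\tfrac{1-p}{p}\binom{n-1}{k-1}$; with $p\ge\tfrac23$ the slack $\tfrac{2p-1}{p}\binom{n-1}{k-1}$ that must be absorbed is controlled once $n$ is large. When $|B|$ is at most, or only barely above, $\binom{n-1}{k-1}$ one must estimate the iterated shadows of small first sets from below by binomial coefficients such as $\binom{n-k-1}{k-1}$ and check that these still exceed the available slack; this is where the hypothesis $n\ge 4k^2-4k+3=(2k-1)^2+2$ is used, while $n\ge\tfrac{1}{1-p}(k^2-1)+2$ pins down the dependence on how unbalanced the separator may be. I expect the bookkeeping to be delicate mainly because the estimate has to be uniform in $p\in[\tfrac23,1)$, where $1-p$ can be small. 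Finally, the very same chain of inequalities, read as a statement about disjoint cross-intersecting (hence complete bipartite, and then complete multipartite) subgraphs of the complement of $\Kneser(n,k)$ with no colour class exceeding a $p$-fraction, yields Theorem~\ref{theorem:turan} with essentially no extra work.
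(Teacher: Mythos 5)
Your plan diverges from the paper's proof in a way that leaves a real gap, and I do not think the calculation you defer can be completed as stated. You apply Kruskal--Katona to the full complement family $\overline B$ and try to derive a purely numerical contradiction from $|A|\le\binom nk-|S_B|$ (plus the symmetric bound from $\overline A$), the balance conditions, and $|A|+|B|>\binom{n-1}{k-1}$. But those constraints are simultaneously satisfiable. Concretely, take $k=3$, $n=27$, so $\binom{n-1}{k-1}=325$, and suppose $N=|A|+|B|=326$ with $|A|=|B|=163$ and $p=\tfrac23$. The cascade representation $163=\binom{25}{24}+\binom{24}{23}+\binom{23}{22}+\binom{22}{21}+\cdots$ gives, by Kruskal--Katona, that the $3$-shadow of the first $163$ $24$-sets has size exactly $\binom{25}{3}+\binom{24}{2}+\binom{23}{1}+\binom{22}{0}=2600$; hence your bound is $|A|\le\binom{27}{3}-2600=325$, and symmetrically $|B|\le325$. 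Both are satisfied by $163$, so there is no contradiction, even though the lemma (and the paper's proof) rule this configuration out. The attempt to rescue the argument via $|A|\le\binom nk-|S_B\cup B|$ also fails, since $B$ may contain edges (it is a union of components, not an independent set), so $S_B$ and $B$ need not be disjoint.

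The missing ingredient is the pigeonhole step the paper makes first (Claim~\ref{claim:akth}): since every $w\in B$ meets a fixed $v\in A$, some single element (WLOG $n$) lies in at least $|B|/k$ vertices of $B$. The paper then works with the \emph{slice} families $A_{-n}$ and $B_n$ inside $[n-1]$, applying the shadow argument at the $(n-k-1)$--to--$(k-1)$ level to $\overline{A_{-n}}^*$ rather than at the $(n-k)$--to--$k$ level to all of $\overline B$. Because $B_n$ is already concentrated on a star, the inequality ``the $(k-1)$-shadow of $\overline{A_{-n}}^*$ avoids $B_n^*$'' combined with the lower bound $|B_n|>\binom{n-3}{k-2}+\binom{n-2}{k-2}$ forces $|A_{-n}|\le\binom{n-3}{k-2}$, which is far stronger than what the global shadow bound yields. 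From there the paper bootstraps to $A_n=A$ and $B_n=B$, i.e.\ every vertex of $G-X$ contains $n$, immediately giving $|G-X|\le\binom{n-1}{k-1}$. Without first fixing an element by pigeonhole and passing to slices, the global complement--shadow approach is too lossy in exactly the regime you flag as delicate.
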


%Firstly, we shall prove a lower bound for the treewidth as required in Theorem~\ref{theorem:main}, which will complete the proof of that theorem.
%Assume, for the sake of a contradiction, that $\tw(G) < \binom{n-1}{k}-1$. Hence $\sep(G) < \binom{n-1}{k}$ by Theorem~\ref{theorem:sep}. So there exists a separator $X$ with respect to $V(G)$ with $|X| < \binom{n-1}{k}$; this is the statement we shall obtain a contradiction for. %This is changed line.
%Then $|G-X| > \binom{n-1}{k-1}$. By Lemma~\ref{lemma:part}, $G-X$ has two parts $A$ and $B$ such that $\frac{1}{3}|G-X| \leq |A| \leq \frac{1}{2}|G-X|$ and $\frac{1}{2}|G-X| \leq |B| \leq \frac{2}{3}|G-X|$ and no edge has an endpoint in both $A$ and $B$. Recall that for Theorem~\ref{theorem:main}, we assume $n \geq 4k^2 - 4k + 3$.

\begin{proof}
Assume, for the sake of a contradiction, that $|X| < \binom{n-1}{k}$. Then $|G-X| > \binom{n-1}{k-1}$. By Lemma~\ref{lemma:part}, $G-X$ has two parts $A$ and $B$ such that $(1-p)|G-X| \leq |A| \leq \frac{1}{2}|G-X|$ and $\frac{1}{2}|G-X| \leq |B| \leq p|G-X|$ and no edge has an endpoint in both $A$ and $B$. 

For a given element $i \in [n]$, let $A_i := \{v \in A: i \in v\}$. Also define $A_{-i} := \{v \in A: i \notin v\}$. So $A_i$ and $A_{-i}$ partition the set $A$, for any choice of $i$. Define analogous sets for $B$.

\begin{claim}
\label{claim:akth}
There exists some $i$ such that $|B_{i}| \geq \frac{1}{k}|B|$.
\end{claim}
\begin{proof}
As $|A| \geq (1-p)|G-X| > 0$, there is a vertex $v \in A$. Without loss of generality, $v = \{1, \dots, k\}$. Each $w \in B$ is not adjacent to $v$, and so $w$ and $v$ intersect. Thus each $w$ must contain at least one of $1, \dots, k$. Hence at least one of these elements appears in at least $\frac{1}{k}|B|$ of the vertices of $B$, as required.
\end{proof}

Without loss of generality, $|B_n| \geq \frac{1}{k}|B|$.

\begin{claim}
\label{claim:lbbn}
$|B_n| > \binom{n-3}{k-2} + \binom{n-2}{k-2}$.
\end{claim}
\begin{proof}
$|B| \geq \frac{1}{2}|G-X| \geq \frac{1}{2}\binom{n-1}{k-1}$. Then by Claim~\ref{claim:akth} and our subsequent assumption, $|B_{n}| \geq \frac{1}{k}|B| \geq \frac{1}{2k}|G-X| \geq \frac{1}{2k}\binom{n-1}{k-1}$. Assume for the sake of a contradiction that $|B_n| \leq \binom{n-3}{k-2} + \binom{n-2}{k-2}$.
So $$\frac{1}{2k}\binom{n-1}{k-1} \leq \binom{n-3}{k-2} + \binom{n-2}{k-2}.$$
Thus $$(n-1)! \leq 2k(k-1)(n-k)((n-3)! + (n-2)!).$$
Hence $$n^2 - 3n + 2 = (n-1)(n-2) \leq 2k(k-1)(2n-k-2) = 4k^{2}n -4kn -2k^3 - 2k^2 + 4k.$$
%$$n^2 - 3n +2 \leq 4k^2n - 2k^3 -4kn - 2k^2 + 4k$$
So $n^2 + (4k-4k^2 - 3)n + 2k^3 + 2k^2 - 4k + 2 \leq 0.$
Since $n \geq 4k^2 - 4k + 3$, it follows $2k^3 + 2k^2 - 4k + 2 \leq 0$. Given that $k \geq 1$, this provides our desired contradiction.
\end{proof}

%GOOD UP TO HERE, I THINK -
%Given a vertex $v$, there are $(n-k)$ elements of $\{1, \dots, n\}$ which are not in $v$. Label this set of $(n-k)$ elements as $\overline{v}$, which we call the \emph{complement of $v$}, or just as a \emph{complement}.  For any set of vertices $Z$, define the set $\overline{Z} = \{\overline{v} : v \in Z\}$. There is a one-to-one correspondence between vertices and complements, so $|Z| = |\overline{Z}|$.

Consider the set $\overline{A_{-n}}$, that is, the complements of the vertices in $A$ that do not contain $n$. So every set in $\overline{A_{-n}}$ contains $n$. Let $\overline{A_{-n}}^{*} := \{ \overline{v} - n: \overline{v} \in \overline{A_{-n}}\}$. That is, remove $n$ from each set in $\overline{A_{-n}}$. There is clearly a one-to-one correspondence between $(n-k)$-sets in $\overline{A_{-n}}$ and $(n-k-1)$-sets in $\overline{A_{-n}}^{*}$.

Similarly, define $B_n^* := \{ v-n : v \in B_n\}$. That is, remove from each vertex of $B_n$ the element $n$, which they all contain. The resultant sets are $(k-1)$-sets in $[n-1]$.

\begin{claim}
\label{claim:avshadow}
If $v^* \in B_n^*$ and $\overline{w}^* \in \overline{A_{-n}}^*$, then $v^* \not\subseteq \overline{w}^*$.
\end{claim}
\begin{proof}
Assume, for the sake of a contradiction, that $v^* \subseteq \overline{w}^*$. Then it follows that $v \subset \overline{w}$, by re-adding $n$ to both sets. Thus $v$ and $w$ are adjacent. However, $v \in B_n \subset B$ and $w \in A_n \subset A$, which is a contradiction.
\end{proof}

Let $S$ be the $(k-1)$-shadow of $\overline{A_{-n}}^*$. Hence if $v \in B_n^*$, then $v \notin S$, by Claim~\ref{claim:avshadow}.
%that is, $S := \{ v: v \in \binom{[n-1]}{k-1}, v \subset w$ for some $w \in \overline{A_{-n}}^*\}$. 
So, it follows that $$B_n^* \subseteq \binom{[n-1]}{k-1} - S.$$

Hence we have an upper bound for $|B_n^*|$ when we take $|S|$ to be minimised. By Lemma~\ref{lemma:minshadow}, $|S|$ is minimised when $\overline{A_{-n}}^*$ is first. %, that is, it is the first $|\overline{A_{-n}}^*|$ $(n-k-1)$-sets in the colexicographic ordering of $1, \dots, n$.

\begin{claim}
\label{claim:ubann}
$|A_{-n}| \leq \binom{n-3}{k-2}$.
\end{claim}
\begin{proof}
$|A_{-n}| = |\overline{A_{-n}}| = |\overline{A_{-n}}^*|$, so it is sufficient to show that $|\overline{A_{-n}}^*| \leq \binom{n-3}{k-2}$. Assume for the sake of contradiction that $|\overline{A_{-n}}^*| \geq \binom{n-3}{k-2} = \binom{n-3}{n-k-1}$. 

Firstly, we show that $|S| \geq \binom{n-3}{k-1}$. It is sufficient to prove this lower bound when $|S|$ is minimised. Hence we can assume that $\overline{A_{-n}}^*$ is first, and contains the first $\binom{n-3}{n-k-1}$ $(n-k-1)$-sets in the colexicographic ordering. That is, it contains all $(n-k-1)$-sets on $[n-3]$. This is because there are $\binom{n-3}{n-k-1}$ such sets, and they come before all other sets in the ordering. In that case, $S$ contains all $(k-1)$-sets in $[n-3]$. As all of the $(k-1)$-sets in $[n-3]$ are in $S$, it follows that $|S| \geq \binom{n-3}{k-1}$, as required.

%%WAS IN ABOVE PARAGRAPH To see this, consider a $(k-1)$-set chosen from $\{1, \dots, n-3\}$. Then add other elements of $\{1, \dots, n-3\}$ to obtain an $(n-k-1)$-set on $\{1, \dots, n-3\}$. So this $(n-k-1)$-set is in $\overline{A_{-n}}^*$ and so our original $(k-1)$-set is in $S$.

%Now, we cannot assume that $\overline{A_{-n}}^*$ is first, but when it is first, $|S|$ is minimised. Thus, irrespective of $\overline{A_{-n}}^*$, we know that $|S| \geq \binom{n-3}{k-1}$. 
Then it follows that $|B_n^*| \leq \binom{n-1}{k-1} - \binom{n-3}{k-1}  = \binom{n-3}{k-2} + \binom{n-2}{k-2}$. However,
$|B_n^*| = |B_n| > \binom{n-3}{k-2} + \binom{n-2}{k-2}$ by Claim~\ref{claim:lbbn}. This provides our desired contradiction.
\end{proof}

\begin{claim}
\label{claim:anb}
%$|A_n| \geq k|A_{-n}|$.
$|A_n| \geq \frac{k}{k+1}|A|$.
\end{claim}
\begin{proof}
First, show that $|A_n| \geq k|A_{-n}|$. Suppose otherwise, for the sake of a contradiction. By Claim~\ref{claim:ubann}, $|A| = |A_n| + |A_{-n}| < (k+1)|A_{-n}| \leq (k+1)\binom{n-3}{k-2}$. But $|A| \geq (1-p)|G-X|$. Hence $(1-p)\binom{n-1}{k-1} < (k+1)\binom{n-3}{k-2}$. Thus $(n-1)(n-2) < \frac{1}{1-p}(k+1)(k-1)(n-k) \leq \frac{1}{1-p}(k+1)(k-1)(n-2)$. Thus $n < \frac{1}{1-p}(k^2 - 1)+1$, which contradicts our lower bound on $n$.
%\begin{corollary}
%\label{corollary:anb}
%$|A_n| \geq \frac{k}{k+1}|A|$
%\end{corollary}
%\begin{proof}

Then $|A_n| \geq k|A_{-n}| = k(|A| - |A_n|)$. So $(k+1)|A_n| \geq k|A|$ as required.
\end{proof}

\begin{claim}
\label{claim:bnisb}
$B_n = B$.
\end{claim}
\begin{proof}
Suppose, for the sake of a contradiction, that there exists some vertex $v \in B$ such that $n \notin v$. So each $w \in A_n$ contains $n$ (by definition) and some element of $v$ (which is not $n$), since $vw$ is not an edge. Any vertex of $A_n$ can be constructed as follows---take element $n$, choose one of the $k$ elements of $v$, and choose the remaining $k-2$ elements from the remaining $n-2$ elements of $[n]$. Thus $$|A_n| \leq 1\cdot k\binom{n-2}{k-2}.$$ Note this is actually a weak upper bound, since we have counted some of the vertices of $A_n$ more than once. Recall $|A| \geq (1-p)|G-X| \geq (1-p)\binom{n-1}{k-1}$. So by Claim~\ref{claim:anb}, $$\frac{(1-p)k}{(k+1)}\binom{n-1}{k-1} \leq \frac{k}{k+1}|A| \leq k\binom{n-2}{k-2}.$$
Thus $\frac{n-1}{k-1} \leq \frac{1}{1-p}(k+1)$ and $n \leq \frac{1}{1-p}(k^2-1)+1$, which contradicts our lower bound on $n$. 
\end{proof}

\begin{claim}
\label{claim:anisa}
$A_n = A$.
\end{claim}
\begin{proof}
This follows by essentially the same argument as Claim~\ref{claim:bnisb}. Assume our claim does not hold and there exists $v \in A$ such that $n \notin v$. By Claim~\ref{claim:bnisb}, $|B_n| = |B| \geq \frac{1}{2}\binom{n-1}{k-1}$. There is an upper bound on $|B_n|$ equal to the upper bound on $|A_n|$ in the previous proof. Then
$$\frac{1}{2}\binom{n-1}{k-1} \leq |B| = |B_n| \leq k\binom{n-2}{k-2},$$ and so $n \leq 2k(k-1)+1.$ This contradicts our lower bound on $n$.
\end{proof}

Claims~\ref{claim:bnisb} and \ref{claim:anisa} show that every vertex in $G-X = A \cup B$ contains $n$. Thus $|G-X| \leq \binom{n-1}{k-1}$ and $|X| \geq \binom{n-1}{k}$, our desired contradiction. %However $|X| \leq \sep(G) \leq \tw(G) + 1$, and $\tw(G) < \binom{n-1}{k} -1$ by our initial assumption. %%MODIFIED LINE
\end{proof}

%Hence, we obtain our desired contraction. We now have an upper and lower bound for $\tw(G)$ when $n \geq 4k^2 - 4k +3$, and Theorem~\ref{theorem:main} is proven.

By Lemma~\ref{lemma:keylemma}, if $X$ is a $\frac{2}{3}$-separator of the Kneser graph $G$ and $n \geq 4k^2 -4k + 3$, then $|X| \geq \binom{n-1}{k}$. Hence by Theorem~\ref{theorem:sep}, $\tw(G) \geq \binom{n-1}{k}-1$. This proves Theorem~\ref{theorem:main}. 

Also, Lemma~\ref{lemma:keylemma} allows us to prove Theorem~\ref{theorem:turan}.
%As a result, we can easily prove Theorem~\ref{theorem:turan}.
%\begin{theorem}
%Let $H$ be the intersection graph with $V(H) = \binom{[n]}{k}$. If $H'$ is a complete multipartite subgraph of $H$ with equal sized colour classes, then $|V(H')| \leq \binom{n-1}{k-1}$.
%\end{theorem}
\begin{proof}[Proof of Theorem~\ref{theorem:turan}]
Let $C_1, \dots, C_r$ be the colour classes of $H$ and recall $G = \Kneser(n,k)$. Let $X:= V(\overline{G}) - V(H)$, so that $X,C_1, \dots, C_r$ is a partition of the vertex set of $\overline{G}$ (and also $G$). In $G$ there are no edges between any pair $C_i,C_j$, and $|C_i| \leq p|H| = p|G-X|$ for each $i$. So $X$ is a $p$-separator of $G$, and $|X| \geq \binom{n-1}{k}$ by Lemma~\ref{lemma:keylemma}. Hence $|H| \leq \binom{n-1}{k-1}$.
%Let $x \in C_i$ and $y \in C_j$ such that $i \neq j$. So $x$ and $y$ must intersect, and so they are not adjacent to each other in $\Kneser(n,k)$. So $X,C_1, \dots, C_r$ is a partition of the vertex set of $\Kneser(n,k)$ with no edge between any pair $C_i,C_j$ and  Hence no component of $\overline{H}-X$ contains more than $\frac{2}{3}$ of the vertices of $\overline{H}-X$, and $X$ is a separator for $\overline{H}$. Since $\overline{H}$ is the Kneser graph and $n \geq 4k^2 - 4k +3$, it follows from Corollary~\ref{corollary:hardtobreakvg} that $|X| \geq \binom{n-1}{k}$. So $|C_1 \cup \dots \cup C_r| = V(H') \leq \binom{n-1}{k-1}$, as required.
\end{proof} 

\section{Lower Bound for Treewidth in Theorem~\ref{theorem:ktwo}} %%BEFORE RE-WRITE, 1.5 PAGES
\label{section:ktwolow}
To complete our proof of Theorem~\ref{theorem:ktwo}, we need to obtain a lower bound on the treewidth when $k=2$. If $n \leq 4$, then Theorem~\ref{theorem:ktwo} is trivial. When $n=5$, then $G$ is the Petersen graph, which has a $K_5$-minor forcing $\tw(G) \geq 4$. Hence we may assume that $n \geq 6$. 

Assume, for the sake of a contradiction that $\tw(G) < \binom{n-1}{2} -1$. Let $(T, (B_x:x \in V(T)))$ be a minimum width tree decomposition for $G$, and normalise the tree decomposition such that if $xy \in E(T)$, then $B_x \not\subseteq B_y$ and $B_y \not\subseteq B_x$. By Theorem~\ref{theorem:sep}, there exists a $\frac{2}{3}$-separator $X$ such that $|X| < \binom{n-1}{2}$. In fact, by the original proof in \cite{GMII}, we can go further and assert that $X$ is a subset of a bag of $(B_x:x \in V(T))$.

Now $|G-X| = \binom{n}{2} - |X| > \binom{n-1}{1} = n-1$. By Lemma~\ref{lemma:part}, $V(G-X)$ has two parts $A$ and $B$ such that $\frac{1}{3}|G-X| \leq |A|,|B| \leq \frac{2}{3}|G-X|$ and there is no edge with an endpoint in $A$ and $B$. (Note that this bound on $|A|$ and $|B|$ is slightly weaker than in Lemma~\ref{lemma:part}, but has the benefit of being the same on both parts.) As $n \geq 6$, it follows that $|A|,|B| \geq 2$. By Theorem~\ref{theorem:ekr}, $V(G-X)$ is too large to be an independent set, and so it contains an edge, with both endpoints in $A$ or both endpoints in $B$.

Without loss of generality this edge is $\{1,2\}\{3,4\} \in A$. Then $B \subseteq \{\{1,3\},\{1,4\},\{2,3\},\{2,4\}\}$. If $B$ contains an edge, then $V(G-X) \subseteq \{\{1,2\},\{1,3\},\{1,4\},\{2,3\},\{2,4\},\{3,4\}\}$ and has maximum order $6$. Otherwise, without loss of generality, $B = \{\{1,3\},\{1,4\}\}$ and $A = \{\{3,4\},\{1,i\}| i \notin \{1,3,4\}\}$, so $|G-X| = n$. (Note $A$ must be exactly that set, or $|G-X|$ is too small.)

If $n \geq 7$, then $|G-X| \geq 7$ and the first case cannot occur. However in the second case, $|B| = 2 < \frac{1}{3}\cdot 7 \leq \frac{1}{3}n$. So neither case can occur, and we have forced a contradiction on either $|G-X|$ or $|B|$. This completes the proof when $n \geq 7$. Hence, let $n = 6$, and  note $|G-X|=6$ in either case.

Now we use the fact that $X$ is a subset of some bag $B_x$. Now for all $x \in V(T)$, $|B_x| \leq \binom{5}{2}-1 = 9$. As $|G-X|=6$, it follows $|X|=9$. Hence $X$ is exactly a bag of maximum order. For either choice of $G-X$, note that $A$ is a connected component. So there is some subtree of $T - x$ that contains all vertices of $A$. Let $y$ be the node of this subtree adjacent to $x$. Also note, for either choice of $G-X$, that each vertex of $X$ has a neighbour in $A$. So every vertex of $B_x$ is also in bag $B_y$, which contradicts our normalisation.

%Let $(T, (B_x:x \in V(T))$ be a tree decomposition for $G$ such that $|B_x| \leq \binom{5}{2}-1=9$ for all $x \in V(T)$. We can normalise this tree decomposition so that $|B_x|=9$ for all $x \in V(T)$, and so $|B_x \cap B_y| = 8$ for each $xy \in E(T)$. It is also known that either $B_x$ is a separator (with respect to $V(G)$) for some $x \in V(T)$, or $B_x \cap B_y$ is a separator (with respect to $V(G)$) for some $xy \in E(T)$ (\cite{GMII}, see also \cite{pttt}). In the second case, $|X| = |B_x \cap B_y| = 8$ and $|G-X| = \binom{6}{2} - 8 = 7$, which is too large. So there exists a separator with respect to $V(G)$ denoted $X$ such that $X = B_x$. For either choice of $G-X$ note that $A$ is a connected component. So there is some subtree of $T - x$ that contains all vertices of $A$. Let the root of this subtree be denoted $y$. Also note, for either choice of $G-X$, that each vertex of $X$ has a neighbour in $A$. So every vertex of $B_x$ is also in bag $B_y$, which contradicts our normalisation.

Thus, if $n \geq 6$, then $\tw(G) \geq \binom{n-1}{2}-1$. This completes the proof of Theorem~\ref{theorem:ktwo}.

\section{Open Questions}
We conjecture that Theorem~\ref{theorem:main} should also hold for smaller values of $n$.
\begin{conjecture}
\label{conjecture:full}
Let $G$ be a Kneser graph with $n \geq 3k$ and $k \geq 2$. Then $\tw(G) = \binom{n-1}{k}-1$.
\end{conjecture}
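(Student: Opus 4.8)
The upper bound $\tw(G)\le\binom{n-1}{k}-1$ is already proved in Section~\ref{section:upper} for all $n\ge 2k+1$ and $k\ge 2$, so the conjecture is equivalent to the matching lower bound, and by Theorem~\ref{theorem:sep} it suffices to prove the conclusion of Lemma~\ref{lemma:keylemma} for $p=\tfrac23$ under the weaker hypothesis $n\ge 3k$: every $\tfrac23$-separator $X$ of $G$ has $|X|\ge\binom{n-1}{k}$. (That $n\ge 3k$ cannot be relaxed to $n\ge 3k-1$ is shown already by the case $k=2$, $n=5$ of Theorem~\ref{theorem:ktwo}, where $\tw(G)=4<5=\binom{4}{2}-1$.) The overall strategy is the one used for Lemma~\ref{lemma:keylemma}: assume $|X|<\binom{n-1}{k}$, so $|G-X|>\binom{n-1}{k-1}$; apply Lemma~\ref{lemma:part} to split $G-X$ into disjoint cross-intersecting families $A$ and $B$ with $\tfrac13|G-X|\le|A|,|B|\le\tfrac23|G-X|$; and then force $A\cup B$ to lie inside the star $\{v:i\in v\}$ for a single element $i$, which gives $|G-X|\le\binom{n-1}{k-1}$, a contradiction.

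The reason the current proof needs $n=\Theta(k^2)$ rather than $n=\Theta(k)$ is that it is wasteful at two steps, each losing a factor of order $k$: Claim~\ref{claim:akth} extracts only an element $i$ with $|B_i|\ge\tfrac1k|B|$ from the fact that every vertex of $B$ meets a fixed $k$-set of $A$, and the bounds $|A_n|\le k\binom{n-2}{k-2}$ in Claims~\ref{claim:bnisb} and~\ref{claim:anisa} similarly pay a factor $k$ for ``choose one of the $k$ elements''. To reach $n\ge 3k$ one needs to avoid both losses. The most promising route is to make the argument global by a \emph{simultaneous compression (shifting) argument} on the pair $(A,B)$: show that the $(i,j)$-compressions can be applied to $A$ and $B$ together so as to preserve the cross-intersecting property and the disjointness (the delicate point being that independently compressing $A$ and $B$ can create overlaps, so one must compress carefully, re-assigning or discarding the few colliding vertices), reducing to the case in which both $A$ and $B$ are shifted. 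For shifted cross-intersecting families the common element must be $1$, so no factor of $k$ is lost locating it; one then runs the Kruskal--Katona estimate of Lemma~\ref{lemma:minshadow} on $\overline{A_{-1}}^{\,*}$, together with a shadow inclusion $B_1^{*}\subseteq\binom{[n-1]}{k-1}\setminus S$, with the sharper counts available for shifted families, to conclude that either $A\cup B\subseteq\{v:1\in v\}$ or already $|A|+|B|\le\binom{n-1}{k-1}$. Equivalently, the conjecture would follow from a stability strengthening of Theorem~\ref{theorem:ekrx} restricted to \emph{disjoint} families: if $(\mathcal A,\mathcal B)$ are disjoint and cross-intersecting in $\Kneser(n,k)$ with $|\mathcal A|,|\mathcal B|$ each at least a fixed fraction of $\binom{n-1}{k-1}$, then $|\mathcal A|+|\mathcal B|\le\binom{n-1}{k-1}$ for all $n\ge 3k$.

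I would organize the write-up as: (i) reduce to the separator statement; (ii) establish that $(A,B)$ may be assumed shifted, tracking the effect of compressions on disjointness; (iii) prove the shifted disjoint cross-intersecting bound via Lemma~\ref{lemma:minshadow} and the complementary-shadow inclusion; (iv) treat separately the boundary cases in which $|A|$ or $|B|$ is close to $\tfrac13|G-X|$, since those are where the near-extremal configurations (a star plus a few extra vertices) live and where $n=3k$ should be tight. I expect step (ii) to be the main obstacle: keeping disjointness under compression of two families at once is exactly what fails in the naive approach, and it is conceivable that compression must be replaced by a direct argument. A fallback would be to iterate the hitting relation (every vertex of $B$ meets \emph{every} vertex of $A$, not merely a single fixed one) to pin down the common element without shifting; but getting that quantitatively strong enough for $n$ as small as $3k$, and in particular controlling the near-triangle-free behaviour of $\Kneser(n,k)$ when $n$ is close to $3k$, looks difficult.
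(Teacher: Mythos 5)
This statement is a \emph{conjecture} in the paper, not a theorem: the authors explicitly leave it open, and their discussion in Section~6 only shows that the hypothesis $n\ge 3k$ cannot be weakened to $n\le 3k-2$ (via the tree decomposition built from $W$), with the case $n=3k-1$ for $k\ge 3$ still undecided. So there is no ``paper proof'' to compare against. What you have written is a research plan, and you yourself flag the decisive gap: step~(ii), the simultaneous $(i,j)$-compression of the two families $A$ and $B$. That gap is real. For a single intersecting or cross-intersecting family the compression $S_{ij}$ preserves the relevant properties, but compressing $A$ and $B$ simultaneously can send some $a\in A$ and $b\in B$ to the same $k$-set, destroying the disjointness that the separator argument depends on; and discarding or reassigning the colliding vertices perturbs exactly the balance condition $\tfrac13|G-X|\le |A|,|B|$ that makes the near-extremal configurations near $n=3k$ delicate. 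Your fallback --- iterating the hitting relation without shifting --- is essentially what the paper already does in Claims~1--7 of Lemma~\ref{lemma:keylemma}, and that is precisely where the two factor-of-$k$ losses you correctly identify come from; you do not supply a way to remove them.

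The parts of the proposal that are sound are the reduction in (i) (the upper bound is in Section~\ref{section:upper}, so the conjecture is equivalent, via Theorem~\ref{theorem:sep} and Lemma~\ref{lemma:part}, to a separator lower bound for $\tfrac23$-separators), and the reformulation as a stability version of Theorem~\ref{theorem:ekrx} for \emph{disjoint} cross-intersecting families; the latter is a clean statement of what would suffice, but it is itself unproved here. One small inaccuracy: you say the Petersen graph shows $n\ge 3k$ cannot be relaxed to $n\ge 3k-1$. That is correct only for the conjecture as stated uniformly over all $k\ge 2$; the paper leaves open the possibility that, for $k\ge 3$, the treewidth formula does hold at $n=3k-1$, with the Petersen graph as an isolated exception. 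In short: the approach is a reasonable direction, but steps (ii) and (iii) are not carried out and (ii) is acknowledged to be the hard part, so this is not a proof of the conjecture.
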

This conjecture follows directly from Theorem~\ref{theorem:ktwo} when $k=2$. The Petersen graph also shows that $n \geq 3k$ is a tight bound when $k=2$.

In general, we can determine a slightly better tree decomposition when $n < 3k-1$. Let $X = \{v \in V(G): 1 \in v\}$, and let $W$ be an independent set in $V(G)-X$ such that no two vertices of $W$ have a common neighbour in $X$. We define a tree decomposition for $G$ with underlying tree $T$ as follows. Let $r$ denote the root node of $T$, and let $r$ have one child node for each vertex in $W$ and each vertex in $X$ adjacent to no vertex in $W$. Label each of these child nodes by their associated vertex of $G$. Let each node labeled by a vertex $w \in W$ have one child node for each vertex of $N(w) \cap X$. Label each of those child nodes by their associated vertex of $G$, and note that since every vertex of $X$ has at most one neighbour in $W$, no vertex of $G$ labels more than one node of $T$.

Define the bag indexed by $r$ to be $V(G)-W-X$. Note this bag contains less than $\binom{n-1}{k}$ vertices when $W \neq \emptyset$. If a node is labeled by a vertex $v \in X$, let the corresponding bag be $N(v) \cup \{v\}$. These bags contain $\binom{n-k}{k}+1$ vertices. If a node is labeled by a vertex $w \in W$, let the corresponding bag be $\{w\} \cup \{u : uw \in E(G), 1 \notin u\} \cup \{u : ux \in E(G)$ where $xw \in E(G)$ and $1 \in x\}$. These bags contain less than $\binom{n-1}{k}$ vertices whenever $|W| \geq 2$, as they contain no vertex in $X$, and each contains only one vertex from $W$. This is a valid tree decomposition, but we omit the proof. When $|W| \geq 2$, the width of this tree decomposition is less than the width given by Lemma~\ref{lemma:gub}.

%We say these nodes are associated to the stated vertices. For each vertex of $X$ with a neighbour in $W$, note by definition it must have exactly one. For each such vertex $v$, add an associated leaf node adjacent to the node associated with the neighbour of $v$ in $W$. These are all nodes of $T$, there are $|W| + |X| + 1$ of them. Then let $B_y$ be a bag indexed by node $y$. If $y$ is associated with some $v \in X$, then let $B_{y} = N(v) \cup \{v\}$. If $y$ is associated with some $v \in W$, then let $B_y = \{v\} \cup \{u : uv \in E(G), 1 \notin u\} \cup \{u : ux \in E(G)$ where $xv \in E(G)$ and $1 \in x\}$. If $y = r$, then let $B_y = V(G) - W -X$. This is a tree decomposition, but we omit the proof.
%Now assume that $|W| \geq 2$. Now $B_r$ contains less than $\binom{n-1}{k}$, since it does not contain the $\binom{n-1}{k-1}$ of $X$ or the vertices of $W$. If $y$ is associated with a vertex of $W$, then $B_y$ does not contain any vertex using $1$, or any vertex of $W$ other than the one $y$ is associated with. If $y$ associated with a vertex of $X$, it contains $\binom{n-k}{k}+1$ vertices. So in either case, $|B_y| < \binom{n-1}{k}$. Hence the width of this tree decomposition is less than $\binom{n-1}{k} - 1$.

However, when $|W| \leq 1$, this tree decomposition has the same width as given by Lemma~\ref{lemma:gub}. We can construct $W$ such that $|W| \geq 2$ iff $n < 3k-1$. For example, let $W = \{\{2,\dots,(k+1)\},\{(k+1), \dots, 2k\}\}.$ If $n \leq 3k-2$, then any vertex of $X$ must be non-adjacent to at least one vertex of $W$. Alternatively, if $n \geq 3k-1$ and $|W| \geq 2$, then there exists two vertices $x,y \in W$ such that $|x \cup y| \leq 2k-1$. Then $X$ contains a vertex adjacent to both $x$ and $y$.
Hence, for general $n$, we cannot improve the lower bound on $n$ in Theorem~\ref{theorem:main} to $3k-2$ or below. This does leave a question about what may occur for $n=3k-1$. It is possible that Theorem~\ref{theorem:main} holds for $n \geq 3k-1$, with the Petersen graph as a single exception.

\subsection*{Acknowledgements}
Thanks to Alex Scott for helpful conversations, and for pointing out references \cite{almostisect,sperner,scott}.

\bibliographystyle{plain}
\bibliography{Kneserbib}

\end{document}